\newtheorem{theorem}{Theorem}[section]
\newtheorem{proposition}[theorem]{Proposition}
\newtheorem{lemma}[theorem]{Lemma}
\theoremstyle{definition}
\newtheorem{remark}[theorem]{Remark}
\newtheorem{definition}[theorem]{Definition}
\title{{\bf\Large Stability of standing waves for logarithmic Schr\"{o}dinger equation with attractive delta potential}}
\author{{\bf\large Jaime Angulo Pava}\footnote{Email: angulo@ime.usp.br}\hspace{3mm} {\bf\large and} {\bf\large Alex Hernandez Ardila }\footnote{Email: alexha@ime.usp.br}\hspace{2mm}
{\bf\large}\vspace{1mm}\\
{\it\small Department of Mathematics, IME-USP}\\
 {\it\small Rua do Mat\~ao 1010, Cidade Universit\'aria,}\\
{\it\small  CEP 05508-090, S\~ao Paulo, SP, Brazil}
\vspace{3mm}}
\date{}
\begin{document}
\maketitle
%%%%%%%%%%%%%% ABSTRACT %%%%%%%%%%%%%% ABSTRACT %%%%%%%%%%%%%%%%%%%%%%%%%%%%%%%%%%%%%%%%%%%%%%%%%%%%%%%%%%%%

\begin{abstract}
We consider the one-dimensional logarithmic Schr\"{o}dinger equation  with a  delta potential. Global well-posedness is verified for the Cauchy problem  in $H^{1}(\mathbb{R})$ and in an appropriate Orlicz space. In the attractive case, we prove orbital stability of the ground states via variational approach. \\
 
{\bf Key words.}  Nonlinear Schr\"{o}dinger equation; delta potential; standing waves; stability.

{\bf AMS subject classifications. 76B25, 35Q51, 35Q55, 35J60, 37K40, 34B37}  
 \end{abstract}
 
%%%%%%%%%%%%% INTRODUCTION %%%%%%%%%%%%%%% INTRODUCTION %%%%%%%%%%%%%%%%%%%%%%%%%%%%%%%%%%%%%%%%%%

\section{Introduction}

The present paper is devoted to the analysis of existence and stability of the ground states for the following nonlinear  Schr\"{o}dinger equation  with a  delta potential: 
\begin{equation}
\label{00NL}
 i\partial_{t}u+(\partial^{2}_{x}+ \gamma\delta(x))u+u\, \mbox{Log}\left|u\right|^{2}=0, 
\end{equation}
where  $\gamma\in\mathbb{R}$ and $u=u(x,t)$ is a complex-valued function of $(x,t)\in \mathbb{R}\times\mathbb{R}$. Here, $\delta(x)$ is the delta measure at the origin. The parameter $\gamma$ is real; when positive, the potential is called attractive, otherwise repulsive. 

%The equation \eqref{00NL} can be viewed as a model of the  singular interaction  between nonlinear wave and an inhomogeneity; the delta potential is used to model an impurity, or defect, localized at the origin.

In the absence of the delta potential, the equation \eqref{00NL} has been proposed in order to obtain a nonlinear equation which helped to quantify departures from the strictly linear regime, preserving some aspects of quantum mechanics, such as separability and additivity of total energy for non-interacting subsystems, the validity of the lower energy bound and Planck's relation for all stationary states (see \cite{BLSE, CAS}). This equation admits  applications in quantum mechanics, quantum optics, nuclear physics, fluid dynamics, plasma physics and  Bose-Einstein condensation (see \cite{AH, APLES} and references therein). 

The formal expression $-\partial^{2}_{x}-\gamma\delta(x)$ which appears in \eqref{00NL} admits a precise interpretation as a self-adjoint  operator $H_{\gamma}$ on $L^{2}(\mathbb{R^{}}^{})$. Indeed, for $u$, $v\in H^{1}(\mathbb{R^{}}^{})$ we have formally
\begin{equation*}
\left\langle (-\partial^{2}_{x}-\gamma\delta(x))u, v\right\rangle=\mathfrak{t}_{\gamma}(u,v),
\end{equation*}
where $\mathfrak{t}_{\gamma}$ is the bilinear form defined on $H^{1}(\mathbb{R^{}}^{})$ by 
\begin{equation}\label{UKI}
\mathfrak{t}_{\gamma}\left(u,v\right)=\Re\int_{\mathbb{R}}\partial_{x}u\partial_{x}\overline{v} dx-\gamma\Re \left[u(0) \overline{v(0)}\right].
\end{equation}
 It is clear that this form is bounded from below and closed on $H^{1}(\mathbb{R^{}}^{})$.  Then it is possible to show that the self-adjoint operator on $L^{2}(\mathbb{R^{}}^{})$ associated with $\mathfrak{t}_{\gamma}$ is  given by (see \cite[Theorem 10.7 and Example 10.7]{KSN})
 \begin{equation}\label{exten}
\begin{cases}
 H_{\gamma} v(x)=-\frac{d^2}{dx^2} v(x)\qquad \text{for}\;\; x\neq 0,\\
v\in \mathrm{dom}(H_{\gamma})=\left\{v\in H^{1}(\mathbb{R^{}}^{})\cap H^{2}(\mathbb{R }\verb'\'  \left\{0 \right\}) : v'(0+)- v'(0-)=-\gamma v(0)\right\}.
\end{cases}
\end{equation}
%such that for $v\in \mathrm{dom}(H_{\gamma})$ we have
%$$
%H_{\gamma}v(x)=-v''(x),\qquad \text{for}\;\; x\neq 0.
%$$
Notice that $H_{\gamma}$ can also be defined via theory of self-adjoin extensions of symmetric operator (see \cite{APIN, AAN, ASOP}).  Now, from Albeverio {\it et.al} (see \cite[Chapter I.3]{APIN} for details) we have the following spectral properties of  $H_{\gamma}$ which will be used in our local well-posedness theory for model (\ref{00NL}):  for $\sigma_{\rm ess}(H_{\gamma})$ and $\sigma_{\rm disc}(H_{\gamma})$ denoting the essential  and discrete spectrum of $H_{\gamma}$, respectively, it is well known that $\sigma_{\rm ess}(H_{\gamma})=[0,\infty)$, for $\gamma\neq 0$; 
$\sigma_{\rm disc}(H_{\gamma})=\emptyset$ for $\gamma < 0$; $\sigma_{\rm dis}(H_{\gamma})=\left\{-\gamma^{2}/4\right\}$ for $\gamma >0$.

%Therefore, equation (\ref{00NL}) is described exactly by the following boundary problem (see \cite{AFI})
%\begin{equation}\label{SCH02}
%\left \{ 
%\begin{aligned}
 %& i\partial_{t}u(x,t)+ \partial^2_{x}u(x,t)= -u(x,t) \mbox{Log} \left|u(x,t)\right|^{2},\,\,\, x\neq0,\; t \in \mathbb R \\
 %& \lim_{x\to 0^+}[u(x,t)-u(-x, t)]=0, \\
 %& \lim_{x\to 0^+}[\partial_x u(x,t)-\partial_x u(-x, t)]=-\gamma u(0,t), \\
 %& \lim_{x\to \pm \infty} u(x,t)=0,
%\end{aligned} \right.
%\end{equation}
%hence $u(x,t)$ must be solution of the classical  logarithmic Schr\"odinger equation on $\mathbb R^{-}$ and $\mathbb R^{+}$, continuous at $x=0$ and satisfy a ``jump condition'' at the origin and it also vanish at infinity.  
Before presenting our results,  let us first introduce some preliminaries. We consider the reflexive Banach space (see Appendix below)
\begin{equation}\label{W}
W(\mathbb R)=\{u\in H^1(\mathbb R): |u|^2 \mbox{Log}|u|^2 \in L^1(\mathbb R)\}.
\end{equation}
By Lemma \ref{APEX23} in Appendix, we have that the operator 
\begin{equation*}
\begin{cases}
{W}^{}({\mathbb{R}})\rightarrow {W}^{\prime}({\mathbb{R}})\\
u\rightarrow  \partial^{2}_{x}u+ \gamma\delta(x)u+u\,  \mathrm{Log}\left|u\right|^{2}
\end{cases}
\end{equation*}
is continuous and bounded. Here, ${W}^{\prime}({\mathbb{R}})$ is the dual space of ${W}^{}({\mathbb{R}})$. Therefore,  if $u\in C(\mathbb{R}, {W}({\mathbb{R}^{}}))\cap C^{1}(\mathbb{R}, {W}^{\prime}({\mathbb{R}}))$, then equation \eqref{00NL} makes sense in ${W}^{\prime}({\mathbb{R}})$. We define the energy functional
\begin{equation}\label{energy}
E(u)=\frac{1}{2} \|\partial_{x}u \|^{2}_{L^{2}} -\frac{\gamma}{2}\left|u(0)\right|^{2}-\frac{1}{2}\int_{\mathbb{R}}\left|u\right|^{2}\mbox{Log}\left|u\right|^{2}dx.
\end{equation}
At least formally, we have that $E$ is conserved by the flow of (\ref{00NL}) (see Proposition \ref{APCS}). Moreover,  by Proposition \ref{DFFE} in Appendix we  also have that   $E$ is well-defined and of class $C^1$ on $W(\mathbb R)$.

We remark that the use  of the space $W(\mathbb R)$ is mainly due to the fact that the functional $E$, in general, fails to be finite and of class $C^1$ on all $H^1(\mathbb R)$ (see Cazenave \cite{CL}).

The next proposition gives a result on the existence of weak solutions to \eqref{00NL} in the energy space $W(\mathbb R)$. We  recall that a global (weak) solution to \eqref{00NL} is a function $u\in C(\mathbb{R}, W(\mathbb{R}^{}))\cap C^{1}(\mathbb{R}, W^{\prime}(\mathbb{R}^{}) )$ solving \eqref{00NL}  in $W^{\prime}(\mathbb{R}^{})$ for all $t\in \mathbb{R}$.

\begin{proposition} \label{PCS}
For any $u_{0}\in W(\mathbb{R})$, there is a unique maximal solution  $u\in C(\mathbb{R}, W(\mathbb{R}^{}))\cap C^{1}(\mathbb{R}, W^{\prime}(\mathbb{R}^{}) )$  of \eqref{00NL}  such that $u(0)=u_{0}$ and $\sup_{t\in \mathbb{R}}\left\|u(t)\right\|_{W(\mathbb{R})}<\infty$. Furthermore, the conservation of energy and charge hold; that is, 
\begin{equation*}
E(u(t))=E(u_{0})\,\,\,   and \,\,\,\, \left\|u(t)\right\|^{2}_{L^{2}}=\left\|u_{0}\right\|^{2}_{L^{2}},  \text{ for all $t\in \mathbb{R}$}.
\end{equation*}
\end{proposition}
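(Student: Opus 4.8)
The plan is to follow a regularization scheme in the spirit of Cazenave and Haraux, adapted to the self-adjoint operator $H_{\gamma}$. The essential difficulty is that the nonlinearity $g(u)=u\log|u|^{2}$ fails to be Lipschitz continuous near the origin, so a direct contraction argument in $W(\mathbb{R})$ is unavailable. Since $H_{\gamma}$ is self-adjoint and bounded below, Stone's theorem furnishes a unitary group $\{e^{-itH_{\gamma}}\}_{t\in\mathbb{R}}$ on $L^{2}(\mathbb{R})$ which restricts to a group of isometries on the form domain $H^{1}(\mathbb{R})$ (with respect to the energy norm associated with $\mathfrak{t}_{\gamma}$). I would first regularize: for $\varepsilon>0$ replace $g$ by a globally Lipschitz approximation
\begin{equation*}
g_{\varepsilon}(z)=z\log\frac{\varepsilon+|z|^{2}}{1+\varepsilon|z|^{2}},
\end{equation*}
which satisfies $g_{\varepsilon}(z)\to g(z)$ pointwise, the growth bound $|g_{\varepsilon}(z)|\le C(1+|z|\,|\log|z|^{2}|)$, and the crucial sign property $\Im[g_{\varepsilon}(z)\bar z]=0$. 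A standard fixed point argument built on Duhamel's formula and the unitarity of $e^{-itH_{\gamma}}$ then yields, for each $\varepsilon$, a unique global solution $u_{\varepsilon}\in C(\mathbb{R},H^{1}(\mathbb{R}))$ of the regularized equation with $u_{\varepsilon}(0)=u_{0}$.

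Next I would derive uniform-in-$\varepsilon$ a priori bounds. Pairing the regularized equation with $\bar u_{\varepsilon}$ and with $\partial_{t}\bar u_{\varepsilon}$ and taking the appropriate real or imaginary parts gives conservation of charge, $\|u_{\varepsilon}(t)\|_{L^{2}}^{2}=\|u_{0}\|_{L^{2}}^{2}$, and of the regularized energy $E_{\varepsilon}(u_{\varepsilon}(t))=E_{\varepsilon}(u_{0})$. To convert energy conservation into a bound on $\|\partial_{x}u_{\varepsilon}\|_{L^{2}}$, I would control the delta term through the one-dimensional trace/Gagliardo--Nirenberg inequality $|u(0)|^{2}\le\|u\|_{L^{\infty}}^{2}\le 2\|u\|_{L^{2}}\|\partial_{x}u\|_{L^{2}}$, absorbing the gradient contribution by Young's inequality, and control the negative part of $\int|u_{\varepsilon}|^{2}\log|u_{\varepsilon}|^{2}$ by an elementary estimate of the form $|s^{2}\log s^{2}|\le C(s^{2-\theta}+s^{2+\theta})$. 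Combined with charge conservation, this bounds $\sup_{t}\|u_{\varepsilon}(t)\|_{H^{1}}$ and $\sup_{t}\big\||u_{\varepsilon}|^{2}\log|u_{\varepsilon}|^{2}\big\|_{L^{1}}$ uniformly in $\varepsilon$; in particular $\sup_{t}\|u_{\varepsilon}(t)\|_{W(\mathbb{R})}<\infty$ and no finite-time blow-up occurs, so the solutions are global and the required bound on $\sup_{t}\|u(t)\|_{W(\mathbb{R})}$ is inherited in the limit.

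I would then pass to the limit $\varepsilon\to 0$. The uniform bounds give, along a subsequence, weak-$*$ convergence $u_{\varepsilon}\rightharpoonup u$ in $L^{\infty}(\mathbb{R},H^{1})$, while the equation shows $\partial_{t}u_{\varepsilon}$ bounded in $L^{\infty}(\mathbb{R},W^{\prime})$; a compactness argument of Aubin--Lions type on bounded space-time regions, using the local compactness of $H^{1}\hookrightarrow L^{2}$, upgrades this to strong $L^{2}_{\mathrm{loc}}$ convergence, which suffices to identify $\lim g_{\varepsilon}(u_{\varepsilon})=g(u)$ and to conclude that $u\in C(\mathbb{R},W(\mathbb{R}))\cap C^{1}(\mathbb{R},W^{\prime}(\mathbb{R}))$ solves \eqref{00NL}, the conservation laws passing to the limit. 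The main obstacle is \emph{uniqueness}, precisely because $g$ is not Lipschitz. Here I would exploit the decisive algebraic property of the logarithmic nonlinearity, valid despite the singularity,
\begin{equation*}
\big|\,\Im\big[(g(z)-g(w))(\bar z-\bar w)\big]\,\big|\le 2\,|z-w|^{2},\qquad z,w\in\mathbb{C}.
\end{equation*}
Given two solutions $u,v$ with the same datum, set $w=u-v$; pairing the difference equation with $\bar w$ in $L^{2}$ and taking the imaginary part annihilates the $H_{\gamma}$ contribution (since $\mathfrak{t}_{\gamma}$ is real) and leaves $\frac{d}{dt}\|w(t)\|_{L^{2}}^{2}\le 4\,\|w(t)\|_{L^{2}}^{2}$, whence Gronwall's inequality forces $w\equiv 0$. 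The same estimate simultaneously yields continuous dependence on the data, completing the proof.
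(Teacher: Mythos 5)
Your overall strategy coincides with the paper's: regularize the logarithmic nonlinearity by a globally Lipschitz approximation, solve the regularized problem globally in $H^{1}(\mathbb{R})$ using the self-adjointness of $H_{\gamma}$ (the paper invokes Theorems 3.3.1 and 3.7.1 of Cazenave's book with the equivalent norm $\|u\|_{X_A}^{2}=\|\partial_x u\|_{L^2}^2+(m+1)\|u\|_{L^2}^2-\gamma|u(0)|^2$), obtain uniform bounds from the conserved quantities, pass to the limit by compactness, and prove uniqueness via the pointwise inequality $|\Im[(z\,\mathrm{Log}|z|^{2}-w\,\mathrm{Log}|w|^{2})(\bar z-\bar w)]|\le C|z-w|^{2}$ together with Gronwall. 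The only real difference of route is cosmetic: you regularize with $g_{\varepsilon}(z)=z\log\frac{\varepsilon+|z|^2}{1+\varepsilon|z|^2}$ in the Cazenave--Haraux style, whereas the paper truncates the two monotone pieces $a$, $b$ of the decomposition $z\,\mathrm{Log}|z|^{2}=b(z)-a(z)$ separately; both work and both feed the same compactness machinery.

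There is, however, one genuine gap: your assertion that ``the conservation laws pass to the limit.'' They do not, at least not for the energy. The functional $E$ contains the term $\int A(|u|)\,dx$ which, under the weak convergence produced by your compactness step, is only \emph{lower} semicontinuous (Fatou), so the limit procedure yields only the one-sided inequality $E(u(t))\le E(u_{0})$ for all $t$. To upgrade this to equality one must use exactly the uniqueness you prove afterwards: fix $t_{0}$, solve the equation with datum $\varphi_0=u(t_{0})$, observe by uniqueness that this solution is $u(\cdot+t_{0})$, and apply the one-sided inequality again to get $E(u_{0})\le E(u(t_{0}))$. This time-reversal step is not optional --- without it the a priori information is strictly weaker than the statement --- and it also carries the burden of proving that $t\mapsto u(t)$ is \emph{strongly} continuous into $W(\mathbb{R})$, which your Aubin--Lions argument alone does not give (it produces an $L^{\infty}$-in-time, weakly continuous limit; strong continuity in $W(\mathbb{R})$ is deduced from the conservation laws, not the other way around). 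With this repair your proposal matches the paper's proof.
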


The proof of Proposition \ref{PCS} will be given in Section 2. In this paper,  we are mainly interested in the study of the orbital stability of standing wave solutions $u(x,t)=e^{i\omega t}\varphi(x)$  for  \eqref{00NL}, where $\omega\in \mathbb{R}$, $\gamma>0$, and $\varphi\in W(\mathbb{R}) \cap dom(H_\gamma$)  is a real valued function which has to solve the following stationary  problem
\begin{equation}\label{eq2}
-\partial^{2}_{x} \varphi - \gamma\delta(x)\varphi+\omega \varphi-\varphi\,  \mathrm{Log}\left|\varphi \right|^{2}=0 \quad \text{in} \quad  W^{\prime}(\mathbb{R}).
\end{equation}
As we will see later in Section \ref{S:2}, there exist a unique positive symmetric solution $\phi_{\omega,\gamma}$ (the soliton  peak-Gausson profile) of \eqref{eq2} which is explicitly given for every $\omega\in \mathbb R$ by
\begin{equation}\label{gausson}
\phi_{\omega,\gamma}(x)=e^{\frac{\omega+1}{2}} e^{-\frac{1}{2} (|x|+\frac{\gamma}{2})^2}.
\end{equation}

This  solution is constructed from the known solution of \eqref{00NL} in the case  $\gamma=0$ (namely, $\phi_{\omega, 0}$) on each side of the defect pasted together at $x=0$ to satisfy the continuity and the jump condition  $\phi_{\omega,\gamma}'(0+)-\phi'_{\omega,\gamma}(0-)=-\gamma\phi_{\omega,\gamma}(0)$ at $x=0$. 

The dependence of $\phi_{\omega, \gamma}$ on $\gamma$ can be seen in Fig. \ref{fig:1e2} below. Notice that the sign of $\gamma$ determines the profile of $\phi_{\omega, \gamma}$ near $x=0$. Indeed, it has a ``'$\vee$'  shape when $\gamma<0$, and a ``$\wedge$'' shape when $\gamma>0$.

\begin{figure}[htbp]
\centering
\fbox{\subfigure[]{\includegraphics[width=45mm]{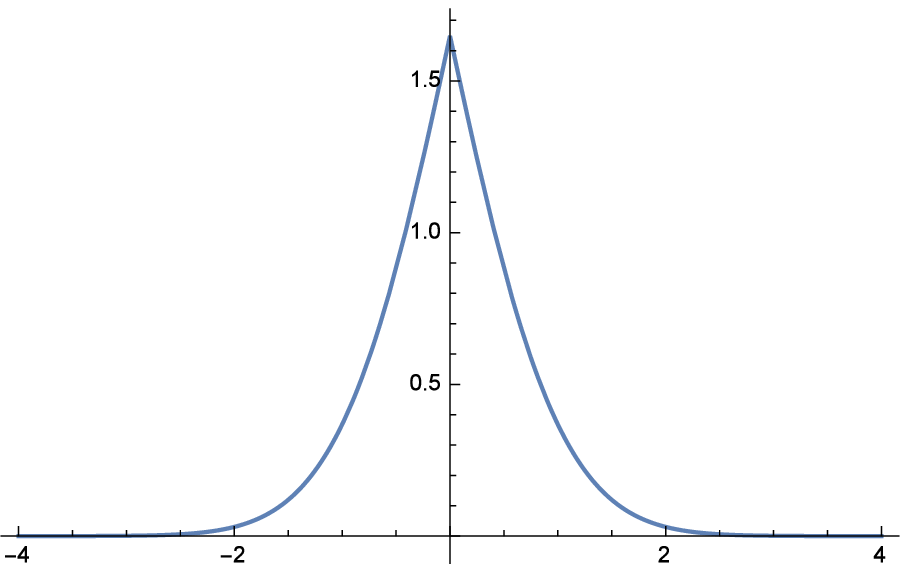}}}\hspace{18mm}
\fbox{\subfigure[]{\includegraphics[width=45mm]{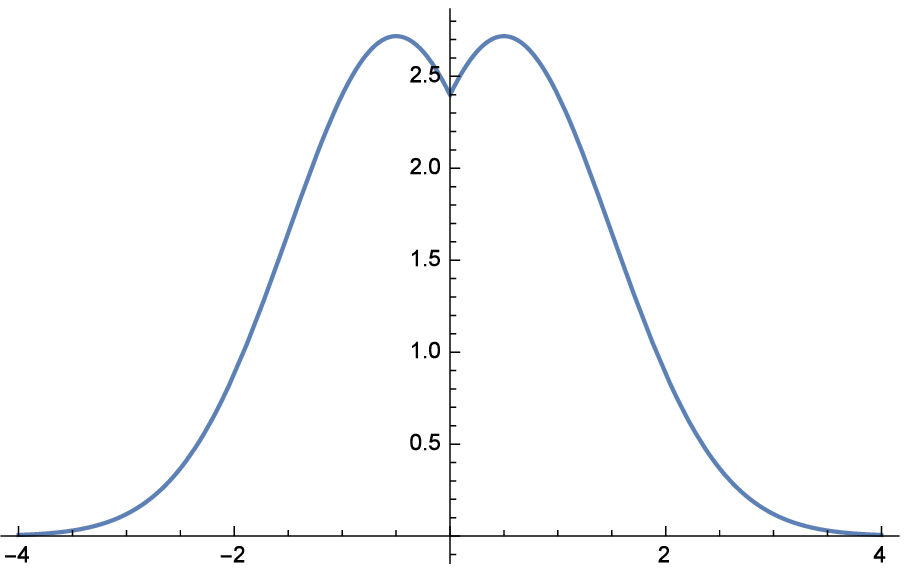}}}
\caption{ \rm{$\phi_{\omega, \gamma}$ as a function of $x$ for  $\omega=1$. (a) $\gamma=1$; (b) $\gamma=-1$.} \label{fig:1e2}}
\end{figure}

For $\gamma=0$, the equation \eqref{00NL} in higher dimensions,
\begin{equation}
\label{00NL2}
 i\partial_{t}u+\Delta u+u\, \mbox{Log}\left|u\right|^{2}=0,
\end{equation}
has been studied previously by several authors (see \cite{BLSE,CAS, CL, CALO, PMS, AH1} and the references therein). In particular,  the Gaussian shape standing-wave for (\ref{00NL2}) (introduced by  Bialynicki-Birula and Mycielski in the '70 \cite{BLSE,CAS})
$$
\varphi_{\omega, N}(x)=e^{\frac{\omega+N}{2}} e^{-\frac{1}{2} |x|^2}
$$ 
in dimension $N$,  it was showed by Cazenave  in  \cite{CAS} that they are orbitally stable in $W(\mathbb{R}^N)=\{u\in H^1(\mathbb R^N): |u|^2 \mbox{Log}|u|^2 \in L^1(\mathbb R^N)\}$ under radial perturbations for $N\geq 2$.  In the case $N\geqq 1$,  we can use the Cazenave-Lions's approach in \cite{CALO} for showing stability on  all $W(\mathbb{R}^N)$. For $N\geqq 3$, d'Avenia\&Montefusco\&Squassina in \cite{PMS} showed the existence of infinitely many weak solutions for 
\begin{equation}\label{logpo}
-\Delta \psi_\omega +\omega\psi_\omega=\psi_\omega \, \mbox{Log} |\psi_\omega|^2,
\end{equation}
and that the Gaussian profile $\varphi_{\omega, N}$ is nondegenerated, that is $Ker(\mathcal L) = span\{\partial_{x_i} \varphi_{\omega, N}: i=1, 2,...N\}$, where $\mathcal Lu = -\Delta u+(|x|^2+ \omega -2)u$ is the linearized operator for $-\Delta u+\omega u = u \, \mbox{Log} (u^2)$.

About model (\ref{00NL}), recently Angulo and Goloshchapova \cite{AAN} proved that $\phi_{\omega,\gamma}$ given by (\ref{gausson}) is orbitally stable  for $\gamma >0$  in the ``weighted space"  \begin{equation*}
\widetilde{W}=H^1(\mathbb R)\cap L^2(x^2\,dx),
\end{equation*}
orbitally unstable in $\widetilde{W}$ for $\gamma <0$, and orbitally stable in $\widetilde{W}_{rad}$ for any $\gamma \neq 0$. The stability analysis in \cite{AAN} relies on the abstract theory by Grillakis, Shatah and Strauss \cite{GST2}, the analytic perturbation theory and extension theory of symmetric operators (see also \cite{AAN2} for applications of the extension theory  in the case of star graphs).
Mention that, since key energetic  functional $E: W(\mathbb{R})\to \mathbb R$ is not twice continuously differentiable at $\phi_{\omega, \gamma}$, the approach elaborated in \cite{GST2} can not be done on $W(\mathbb R)$,  but they can do it on the weighted space  $\widetilde{W}$ which is continuously embedding in $W(\mathbb R)$. This is the  main reason because in \cite{AAN} was necessary to use the space $\widetilde{W}$ in the stability approach.

The purpose in this paper is to extend the results  in  \cite{AAN}  about the  stability of  $\phi_{\omega, \gamma}$ in (\ref{gausson}) to the space $W(\mathbb R)$  for the case $\gamma>0$. Our approach will be based on a variational characterization of $\phi_{\omega, \gamma}$. It characterization can not be used to treat the case $\gamma <0$ and it is left open (see Remark \ref{RM1} below).

The basic symmetry associated to equation \eqref{00NL} is the phase-invariance (while the translation invariance  does not hold due to the defect). Thus,  the definition  of stability  takes into account only  this  type of symmetry and is formulated as follows.

\begin{definition}\label{2D111}
We say that  a standing wave solution $u(x,t)=e^{i\omega t}\phi(x)$ of \eqref{00NL} is orbitally stable in $W(\mathbb{R})$ if for any  $\epsilon>0$ there exist $\eta>0$  such that if $u_{0}\in W(\mathbb{R})$ and $\left\|u_{0}-\phi \right\|_{W(\mathbb{R})}<\eta$, then the solution $u(t)$ of  \eqref{00NL}  with $u(0)=u_{0}$ exist for all $t\in \mathbb R$ and satisfies 
\begin{equation*}
{\rm\sup\limits_{t\in \mathbb R}} {\rm\inf\limits_{\theta\in \mathbb{R}}} \|u(t)-e^{i\theta}\phi \|_{W(\mathbb{R})}<\epsilon.
\end{equation*}
Otherwise, the standing wave $e^{i\omega t}\phi(x)$ is said to be  unstable in $W(\mathbb{R})$.
\end{definition}

Next we state our main result in this paper.

\begin{theorem} \label{2ESSW}
Let $\omega \in \mathbb{R}$. If  $\gamma>0$,  then  the standing wave $e^{i\omega t}\phi_{\omega, \gamma}(x)$, where $\phi_{\omega, \gamma}$ is given in (\ref{gausson}),  is orbitally stable in  $W(\mathbb{R})$.
\end{theorem}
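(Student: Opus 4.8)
The plan is to give $\phi_{\omega,\gamma}$ a variational characterization as the unique (up to phase) minimizer of the energy $E$ under a fixed–charge constraint, and then to deduce orbital stability from compactness of minimizing sequences together with the conservation laws of Proposition \ref{PCS}. Fixing $\rho=\|\phi_{\omega,\gamma}\|_{L^2}^2$, I would study
\begin{equation*}
m(\rho)=\inf\{E(u):u\in W(\mathbb{R}),\ \|u\|_{L^2}^2=\rho\},
\end{equation*}
with $E$ as in \eqref{energy}. First I would show this problem is well posed. Writing $F(u)=\int_{\mathbb{R}}|u|^2\,\mathrm{Log}|u|^2\,dx$ and invoking the Gaussian logarithmic Sobolev inequality
\begin{equation*}
F(u)\le \frac{a^2}{\pi}\|\partial_x u\|_{L^2}^2+\big(\mathrm{Log}\,\|u\|_{L^2}^2-1-\mathrm{Log}\,a\big)\|u\|_{L^2}^2,\qquad a>0,
\end{equation*}
with $a<\sqrt{\pi}$, together with the trace bound $|u(0)|^2\le C\|u\|_{L^2}\|\partial_x u\|_{L^2}$, I expect $E$ to be bounded below and coercive in $\|\partial_x u\|_{L^2}$ on the constraint set. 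Hence $m(\rho)\in\mathbb{R}$ and every minimizing sequence $(u_n)$ is bounded in $H^1(\mathbb{R})$. Boundedness in the full $W(\mathbb{R})$ norm then follows because $F(u_n)$ is bounded along a minimizing sequence while its positive part is controlled by the $L^\infty\cap L^2$ bounds, so its negative part is bounded as well.

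Second, I would prove strong relative compactness of minimizing sequences in $W(\mathbb{R})$, up to a phase. Here the attractive sign $\gamma>0$ is essential: since $E(u)=E_0(u)-\tfrac{\gamma}{2}|u(0)|^2$, comparing with the translation–invariant free problem ($\gamma=0$, infimum $m_0$) and centering a free near-optimizer at the origin gives the strict binding inequality $m(\rho)<m_0(\rho)$, and more generally the strict subadditivity $m(\rho)<m(\alpha)+m_0(\rho-\alpha)$ for $0<\alpha<\rho$. Applying Lions' concentration–compactness to $(|u_n|^2)$, these inequalities rule out vanishing (all mass escaping the well) and dichotomy, forcing concentration near the defect; I would then upgrade weak $H^1$ convergence, pointwise convergence and the recovery of $\|u\|_{L^2}^2=\rho$ to strong convergence $u_n\to\psi$ in $W(\mathbb{R})$, including convergence of the logarithmic term via a uniform–integrability/Brezis–Lieb argument for $|u|^2\mathrm{Log}|u|^2$. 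The limit $\psi$ is a minimizer with $E(\psi)=m(\rho)$.

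Third, I would identify the minimizer. Any $\psi$ attaining $m(\rho)$ satisfies the Euler–Lagrange equation $-\partial_x^2\psi-\gamma\delta\psi-\psi\,\mathrm{Log}|\psi|^2=(1+2\lambda)\psi$ for a Lagrange multiplier $\lambda$; setting $\omega'=-1-2\lambda$ recovers \eqref{eq2}. By the diamagnetic inequality one may replace $\psi$ by $|\psi|$ without increasing $E$ (the three terms are unchanged, decreased, unchanged respectively), so the minimizer can be taken nonnegative, and symmetric decreasing away from the origin by rearrangement adapted to the defect; the strong maximum principle makes it strictly positive. The uniqueness of the positive symmetric solution of \eqref{eq2} established in Section \ref{S:2} then forces $\psi=e^{i\theta}\phi_{\omega',\gamma}$, and since $\omega\mapsto\|\phi_{\omega,\gamma}\|_{L^2}^2=C(\gamma)e^{\omega+1}$ is a strictly increasing bijection, the constraint $\|\psi\|_{L^2}^2=\rho$ yields $\omega'=\omega$. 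Thus the minimizing set is exactly the orbit $\{e^{i\theta}\phi_{\omega,\gamma}:\theta\in\mathbb{R}\}$.

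Finally, orbital stability follows by the standard contradiction argument. If it failed, there would be data $u_{0}^{n}\to\phi_{\omega,\gamma}$ in $W(\mathbb{R})$ and times $t_n$ with $\inf_\theta\|u_n(t_n)-e^{i\theta}\phi_{\omega,\gamma}\|_{W}\ge\epsilon_0$. By conservation of charge and energy (Proposition \ref{PCS}) one has $\|u_n(t_n)\|_{L^2}^2\to\rho$ and $E(u_n(t_n))\to E(\phi_{\omega,\gamma})=m(\rho)$; after a harmless normalization to exact charge $\rho$, using continuity of $m(\cdot)$ and the scaling identity $F(\lambda u)=\lambda^2F(u)+\lambda^2\mathrm{Log}(\lambda^2)\|u\|_{L^2}^2$, the sequence $u_n(t_n)$ is minimizing, hence converges in $W(\mathbb{R})$ to the orbit by the previous steps, contradicting the escape. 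I expect the main obstacle to be the second step: obtaining strong convergence in the full $W(\mathbb{R})$ topology rather than merely in $H^1(\mathbb{R})$, that is, controlling the nonsmooth Orlicz-type functional $\int|u_n|^2\,\mathrm{Log}|u_n|^2$ (both its defect of compactness at infinity and its uniform integrability), which is exactly the feature that blocks the Grillakis–Shatah–Strauss scheme on $W(\mathbb{R})$ and which the binding inequality from $\gamma>0$ must overcome.
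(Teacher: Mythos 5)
Your proposal is sound in outline but follows a genuinely different route from the paper. You minimize the energy $E$ at fixed charge $\|u\|_{L^2}^2=\rho$ (the Cazenave--Lions scheme) and invoke Lions' concentration--compactness with the binding inequalities $m(\rho)<m(\alpha)+m_0(\rho-\alpha)$; the paper instead minimizes the action $S_{\omega,\gamma}$ on the Nehari manifold $\{I_{\omega,\gamma}(u)=0\}$, where $S_{\omega,\gamma}$ reduces to $\tfrac12\|u\|_{L^2}^2$, so that dichotomy never arises (the reduced functional is weakly lower semicontinuous) and only vanishing must be excluded, which is done by the single strict inequality $d_\gamma(\omega)<d_0(\omega)$ together with an explicit projection onto the Nehari manifold, $\lambda_n=\exp\bigl(I_{\omega,0}(u_n)/(2\|u_n\|_{L^2}^2)\bigr)$, made possible by the logarithmic scaling law. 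Your approach buys a physically natural characterization (ground state at fixed mass) but pays for it with the full subadditivity verification, which you assert rather than prove; it does go through here, because $E(\lambda u)=\lambda^2E(u)-\tfrac12\lambda^2\,\mathrm{Log}(\lambda^2)\|u\|_{L^2}^2$ forces $m(\rho)=\rho\, m(1)-\tfrac12\rho\,\mathrm{Log}\,\rho$ (and likewise for $m_0$), so strict subadditivity reduces to the single binding inequality $m(1)<m_0(1)$ plus the strict concavity of $-x\,\mathrm{Log}\,x$, but this computation is the crux and should be written out. The paper's approach also identifies the frequency for free (minimizers on the Nehari manifold solve \eqref{eq2} with the given $\omega$), whereas you must match the Lagrange multiplier to $\omega$ through the mass--frequency bijection $\omega\mapsto C(\gamma)e^{\omega+1}$, and you additionally need the rearrangement/diamagnetic step (valid only for $\gamma>0$, since then $-\gamma|u^{*}(0)|^2\le-\gamma|u(0)|^2$) before Proposition \ref{UDGS} applies. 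Both routes ultimately rest on the same two ingredients from the paper: the Br\'ezis--Lieb-type splitting of $\int|u|^2\mathrm{Log}|u|^2$ via the decomposition into the convex functions $A$ and $B$ (Lemma \ref{L4} and Proposition \ref{orlicz}), which is what upgrades $H^1$-convergence to $W(\mathbb{R})$-convergence, and the uniqueness of positive solutions of \eqref{eq2} (Proposition \ref{UDGS}); the closing contradiction argument is identical.
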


The proof of Theorem \ref{2ESSW} is based on the variational characterization of the stationary solutions $\varphi$ for (\ref{eq2}) as minimizers  of the action $S_{\omega, \gamma} (u)=E(u)+\frac{\omega+1}{2}\|u\|^2_{L^2}$ on the Nehari manifold
$$
\left\{u\in W(\mathbb{R}^{}) \setminus  \left\{0 \right\}:  I_{\omega,\gamma}(u)=0\right\}
$$
with $I_{\omega,\gamma}(u)=2E(u)+\omega\|u\|^2_{L^2}$ (see Theorem \ref{ESSW}), and the uniqueness of positive solutions (modulo rotations) for (\ref{eq2})  given by the peak-Gausson profile (\ref{gausson}) (see Proposition \ref{UDGS}). We remark that an analogous variational analysis have been used for NLS equations with point interactions on all line by Fukuizumi\&Jeanjean \cite{RJJ}, Fukuizumi\&Otha\&Ozawa \cite{FO}, Adami\&Noja \cite{ADNP}, Adami\&Noja\&Visciglia \cite{ADNV}, and on star graphs by  Adami\&Cacciapuoti\newline\&Finco\&Noja \cite{ACFN}.

We also note that recently   has been considered  equation (\ref{00NL2}) with an external potential $V$ satisfying specific conditions (see Ji\&Szulkin \cite{JS} and Squassina\&Szulkin\cite{SS}),
 \begin{equation*}\label{NLSLP}
i\partial_t u+\Delta u -V(x)u+ u\, \mbox{Log}|u|^2=0.
\end{equation*}
 From the results of Ji and Szulkin in \cite{JS} follows that there exist infinitely many profiles of standing wave  $u(x,t)=e^{i\omega t} f_\omega$ (see also  \cite{SS}) for $V$ being coercive. Namely,  the elliptic equation
\begin{equation}\label{lopo}
-\Delta f_\omega +(V(x)+\omega)f_\omega=f_\omega \, \mbox{Log} |f_\omega|^2
\end{equation}
has infinitely many  solutions for $V\in C(\mathbb R^N, \mathbb R)$ such that  $\lim\limits_{|x|\to \infty} V(x)=+\infty$. Moreover they also showed the existence of  a ground state solution (a nontrivial positive solution with least possible energy) for   bounded potential such that  $\omega+1+ V_{\infty}>0$,  in which   
$$
V_{\infty}:=\lim\limits_{|x|\to \infty} V(x)=\sup\limits_{x\in \mathbb R^N} V(x),
$$
 and  $\sigma(-\Delta +V(x)+\omega+1)\subset (0, +\infty)$ (here $\sigma(A)$ represents the spectrum of the linear operator $A$).  Thus, we see that in the case of a delta-potential $V(x)=-\gamma\delta(x)$ the later restriction on the ``frequency'' $\omega$ is ineffective (see proof of Theorem \ref{ESSW} below).

The rest of the paper is organized as follows. In Section \ref{S:0}, we give an idea of the proof of  Proposition \ref{PCS}. In Section  \ref{S:2}, we prove that the stationary problem (\ref{eq2}) has a unique nonnegative nontrivial solution. Section  \ref{S:3} is devoted to give a variational  characterization of the stationary solutions of (\ref{eq2}). In Section \ref{S:4}, we establish  the proof of Theorem \ref{2ESSW}. In the Appendix we include  informations about the space  $W(\mathbb{R})$ and the smooth property of the energy functional $E$ in (\ref{energy}).

\paragraph{\bf Notation:} The space $L^{2}(\mathbb{R},\mathbb{C})$  will be denoted  by $L^{2}(\mathbb{R})$ and its norm by $\|\cdot\|_{L^{2}}$.  This space will be endowed  with the real scalar product
\begin{equation*}
 \left(u,v\right)=\Re\int_{\mathbb{R}}u\overline{v}\, dx, \,\,\,\,\,\,\,\,\,\,\,\,\, \mathrm{for }\,\,\,\,\,\,\,\ u,v\in L^{2}\left(\mathbb{R}\right).
\end{equation*}
The space $H^{1}(\mathbb{R},\mathbb{C})$  will be denoted  by $H^{1}(\mathbb{R})$ and its norm by $\|\cdot\|_{H^{1}(\mathbb{R})}$.
The dual space of  $H^{1}(\mathbb{R})$ will be denoted by $H^{-1}(\mathbb{R})$. We denote by $C_{0}^{\infty}\left(\mathbb{R}\setminus\left\{0\right\}\right)$ the set of $C^{\infty}$ functions from $\mathbb{R}\setminus\left\{0\right\}$ to $\mathbb{C}$ with compact support. Throughout this paper, the letter $C$ will denote positive constants.\\

\section{The Cauchy problem}
\label{S:0}

In this section we prove the well-posedness of the Cauchy Problem for \eqref{00NL} in the energy space $W(\mathbb{R})$. The idea of the proof is an adaptation of the proof of \cite[Theorem 9.3.4]{CB}.  So, we will approximate the logarithmic nonlinearity by a smooth nonlinearity, and as a consequence we construct a sequence of global solutions of the regularized Cauchy problem in $C(\mathbb{R}, H^1(\mathbb{R}))$,  then we pass to the limit using standard compactness results, extract a subsequence which converges to the solution of the limiting equation \eqref{00NL}.

First, let  us  establish  the  following  well-posedness  result  in $ H^{1}(\mathbb{R})$ associated  with the NLS equation with a delta potential
\begin{equation}\label{HAX}
\begin{cases}
i\partial_{t}u+\partial^{2}_{x}u+ \gamma\delta(x)u+g(u)=0,\\
u(0)=u_{0}\in H^{1}(\mathbb{R}),
\end{cases}
\end{equation}
where $g: L^{2}(\mathbb{R})\rightarrow L^{2}(\mathbb{R})$ is  globally Lipschitz continuous  on $ L^{2}(\mathbb{R})$, $\Im(g(u),iu)=0$, and such that there exist $G\in C(H^1(\mathbb{R}),\mathbb{R})$ with $G^{\prime}=g$.
\begin{proposition} \label{APCS}
For any $u_{0}\in H^{1}(\mathbb{R})$, there is a unique maximal solution  $u\in C(\mathbb{R},H^{1}(\mathbb{R}))\cap C^{1}(\mathbb{R}, H^{-1}(\mathbb{R}))$  of \eqref{HAX} such that $u(0)=u_{0}$. Furthermore, the conservation of charge and energy hold; that is, for all $t\in \mathbb{R}$, $\left\|u(t)\right\|^{2}_{L^{2}}=\left\|u_{0}\right\|^{2}_{L^{2}}$ and
\begin{equation*}
\mathcal{E}(u(t))=\mathcal{E}(u_{0}), \quad \text{where} \quad \mathcal{E}(u)=\frac{1}{2}\left\|\partial_{x} u\right\|^{2}_{L^{2}}-\frac{\gamma}{2}\left|u^{}(0)\right|^{2} -G(u).
\end{equation*}
\end{proposition}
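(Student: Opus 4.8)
The plan is to recast \eqref{HAX} as the abstract semilinear problem $i\partial_t u = H_\gamma u - g(u)$ and to solve it by a Duhamel fixed-point argument, following the scheme of Cazenave \cite{CB}. The first ingredient is the operator theory recorded above: since $H_\gamma$ is self-adjoint and bounded below on $L^2(\mathbb{R})$, Stone's theorem yields a strongly continuous unitary group $\{e^{-itH_\gamma}\}_{t\in\mathbb{R}}$ on $L^2(\mathbb{R})$. Because the form domain of $H_\gamma$ is exactly $H^1(\mathbb{R})$, for $M>0$ large the norm $\|(H_\gamma+M)^{1/2}\cdot\|_{L^2}$ is equivalent to $\|\cdot\|_{H^1(\mathbb{R})}$; as $(H_\gamma+M)^{1/2}$ commutes with the group and the group is an $L^2$-isometry, $e^{-itH_\gamma}$ restricts to an isometric group on $H^1(\mathbb{R})$ for this equivalent norm. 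I then declare $u$ a solution when it satisfies the integral equation
\begin{equation*}
u(t)=e^{-itH_\gamma}u_{0}+i\int_{0}^{t}e^{-i(t-s)H_\gamma}g(u(s))\,ds,
\end{equation*}
which is equivalent to \eqref{HAX} in $H^{-1}(\mathbb{R})$.

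Next I would solve this equation in $L^2$. Writing $\Phi$ for the right-hand side and $L$ for the global Lipschitz constant of $g$, the $L^2$-isometry of the group gives $\sup_{|t|\le T}\|\Phi(u)(t)-\Phi(v)(t)\|_{L^2}\le LT\sup_{|t|\le T}\|u(t)-v(t)\|_{L^2}$, so $\Phi$ is a contraction on $C([-T,T],L^2(\mathbb{R}))$ whenever $LT<1$. This produces a unique solution on $[-T,T]$, and since $T$ can be chosen to depend only on $L$, the construction is iterated on consecutive intervals to yield a unique global solution $u\in C(\mathbb{R},L^2(\mathbb{R}))$. Conservation of charge follows from the gauge condition $\Im(g(u),iu)=0$: differentiating along the flow, the quadratic term involving $H_\gamma$ vanishes by self-adjointness and the nonlinear term vanishes by hypothesis, so $\frac{d}{dt}\|u(t)\|_{L^2}^{2}=0$.

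The main obstacle is the propagation of $H^1$ regularity: the Schr\"odinger group does not smooth, and $g(u(s))$ is a priori only in $L^2(\mathbb{R})$, so the Duhamel integral is not seen to lie in $H^1(\mathbb{R})$ directly. To resolve this I would exploit the potential structure $G'=g$ with $G\in C^{1}(H^{1}(\mathbb{R}),\mathbb{R})$, which gives conservation of $\mathcal E$. The rigorous scheme is a regularization argument: approximate $u_0$ by data $u_0^{n}\to u_0$ in $H^1(\mathbb{R})$ and replace the generator by its Yosida approximation (after a shift rendering $H_\gamma$ positive), so that the resulting solutions $u^{n}$ are genuinely $H^1$-valued and satisfy both conservation laws by direct differentiation. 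Using the Lipschitz bound $|g(z)|\le|g(0)|+L|z|$, the functional $G$ grows at most quadratically and is controlled by the conserved charge, while the trace term obeys $|u^{n}(0)|^{2}\lesssim\|u^{n}\|_{L^2}\|\partial_x u^{n}\|_{L^2}$ and is absorbed via Young's inequality; combining this with energy conservation yields a bound on $\|\partial_x u^{n}(t)\|_{L^2}$ uniform in $t$ and $n$, hence a uniform $H^1$ bound. Passing to the limit $n\to\infty$ and invoking the $L^2$ continuous dependence already established identifies the limit with $u$ and gives $u\in L^{\infty}(\mathbb{R},H^1(\mathbb{R}))$; the integral equation then upgrades this to $u\in C(\mathbb{R},H^{1}(\mathbb{R}))\cap C^{1}(\mathbb{R},H^{-1}(\mathbb{R}))$ together with $\mathcal E(u(t))=\mathcal E(u_0)$, and uniqueness in this class is inherited from the $L^2$ contraction through Gronwall's inequality.
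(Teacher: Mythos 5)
Your argument is correct in substance and lands on the same underlying machinery as the paper, but it reaches it by a partly different route. The paper's proof is essentially a verification exercise: it observes that $A=-H_\gamma-m$ is self-adjoint and nonpositive, that the form norm $\|u\|_{X_A}^2=\|\partial_x u\|_{L^2}^2+(m+1)\|u\|_{L^2}^2-\gamma|u(0)|^2$ is equivalent to the $H^1$-norm, checks the hypotheses of Theorems 3.3.1 and 3.7.1 of \cite{CB} (whose internal mechanism is precisely the regularization--compactness scheme you describe: approximate, get uniform $H^1$ bounds from the conservation laws, pass to the limit, and recover energy \emph{equality} from the energy \emph{inequality} via uniqueness and time reversal), and then proves uniqueness exactly as you do, by the Duhamel formula, the global Lipschitz continuity of $g$ on $L^2$, and Gronwall. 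What you add is a direct Banach fixed-point construction of the mild solution in $C([-T,T],L^2(\mathbb{R}))$ with $T$ depending only on the Lipschitz constant; this is a legitimate and more self-contained existence mechanism than the compactness method, and it buys you global $L^2$ well-posedness and charge conservation cheaply, at the cost of having to run the regularization afterwards anyway to propagate $H^1$ regularity and obtain conservation of $\mathcal{E}$. The a priori $H^1$ bound you extract (quadratic growth of $G$ from the Lipschitz bound, absorption of the trace term $|u(0)|^2\lesssim\|u\|_{L^2}\|\partial_x u\|_{L^2}$ by Young's inequality) is exactly the globality argument implicit in the paper's appeal to \cite{CB}.

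Two steps in your final paragraph are stated more confidently than they are justified, though both are standard and are handled inside the cited theorems. First, the limit of the regularized solutions only yields $\mathcal{E}(u(t))\leq\mathcal{E}(u_0)$ by weak lower semicontinuity; upgrading this to equality (and upgrading $u\in L^\infty(\mathbb{R},H^1(\mathbb{R}))$ to $u\in C(\mathbb{R},H^1(\mathbb{R}))$) requires running the argument backwards from time $t$ and invoking uniqueness, which you should say explicitly. Second, since your approximate problems change the generator and not only the data, identifying their limit with the $L^2$ fixed point of the original Duhamel equation is not covered by the continuous dependence you established for the fixed generator $H_\gamma$; you need the strong convergence of the Yosida-approximated groups on compact time intervals before Gronwall applies. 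Neither point is a genuine obstruction, but as written they are gaps in the chain of implications.
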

\begin{proof} The idea will be to check the assumptions of  Theorem 3.3.1 and Theorem 3.7.1 in  \cite{CB} for obtaining the local result.  Indeed, first we note that $H_\gamma$ defined in \eqref{exten} satisfies $H_\gamma\geq -m$, where $m = \gamma^2/4$ if $\gamma > 0$, and $m = 0$ if $\gamma < 0$. Thus, we have that $A \equiv - H_\gamma-m $ is a self-adjoint operator, $A\leqq 0$ on $X = L^2(\mathbb{R})$ with domain $\mathrm{dom}(A) = \mathrm{dom}(H_\gamma)$. Moreover, in our case the  following norm on $H^1(\mathbb{R})$
\begin{equation*}
\left\|u\right\|^2_{X_A}=\left\|\partial_{x} u\right\|_{L^2}^2+(m+1)\left\|u\right\|_{L^2}^2-\gamma|u(0)|^2,
\end{equation*}
is  equivalent to the usual $H^1(\mathbb{R})$-norm. Next, it is easy see that the conditions (3.7.1), (3.7.3)-(3.7.6) in \cite[Section 3.7]{CB} hold choosing $r=\rho=2$, because we are in one dimensional case.  Also, the condition (3.7.2)  follows easily from the self-adjoint property of $A$. Lastly, we need to show that there is uniqueness for the problem \eqref{HAX}. Thus, let $I$ be an interval containing $0$ and let $u_{1}$, $u_{2}\in L^{\infty}(I, H^1(\mathbb{R}))\cap W^{1,\infty}(I, H^1(\mathbb{R}))$ be two solutions of \eqref{HAX}. It follows that (see \cite[Remark 3.7.2]{CB})
\begin{equation*}
u_{2}(t)-u_{1}(t)=i\,\int^{t}_{0}e^{-i\,A(t-s)}\left(g(u_{2}(s))-g(u_{1}(s))\right)ds \quad \text{for all} \quad t\in I.
\end{equation*}
Since $g$ is  globally Lipschitz continuous on $L^2(\mathbb R)$, there exist a constant  $C$ such that 
\begin{equation*}
\|u_{2}(t)-u_{1}(t)\|^{2}_{L^{2}}\leq C \int^{t}_{0}\|u_{2}(s)-u_{1}(s)\|^{2}_{L^{2}}ds ,
\end{equation*}
and therefore the uniqueness  follows by Gronwall's Lemma. Therefore, we obtain that the  initial value problem (\ref{HAX}) is locally well posed in $(H^1(\mathbb{R}), \|\cdot\|_{X_A})$. Moreover, we have the conservation of charge and energy. 

Finally, from Theorem 3.3.1 and Theorem 3.7.1 in  \cite{CB} we see easily that the solution of  (\ref{HAX}) is global in $H^{1}(\mathbb{R})$. This finishes the Proposition.
\end{proof}

\begin{remark}
 For the  completeness of the exposition we recall that for $\gamma<0$ the unitary group $G_\gamma(t)=e^{-it H_\gamma}$ associated to  equation \eqref{HAX} is given explicitly  by the formula (see \cite{Holmer5}),
 \begin{equation*}\label{pospro1}
G_\gamma(t) \phi(x)= e^{it\partial_x^2} \phi (x)  + e^{it \partial_x^2} (\phi\ast \rho_\gamma) (|x|),
\end{equation*}
where
$$
\rho_\gamma(x)=-\frac{\gamma}{2} e^{\frac{\gamma}{2} x}\chi^0_{+}.
$$
Here  $\chi^0_{+}$   denotes  the characteristic function of $[0,+\infty)$ and $e^{it\partial_x^2}$ represents the free group of Schr\"odinger ($\gamma =0$). For the case $\gamma>0$ we refer to  \cite{DH}. Thus, an explicit formula for the group $e^{it A}$ is possible to be  obtained.
\end{remark}

\begin{proof}[ \bf {Proof of Proposition \ref{PCS}}]
The proof is an adaptation  of the proof of \cite[Theorem 9.3.4]{CB}. We only  discuss the  modifications that are not sufficiently clear in our case. We first regularize the logarithmic nonlinearity near the origin. Indeed, for $z\in \mathbb{C}$,  we define the functions $a_{m}$ and $b_{m}$ by 
\begin{equation*}
a_{m}(z)=
\begin{cases}
 a_{}(z), &\text{if $\left|z\right|\geq \frac{1}{m}$;}\\
m\,z\,a_{}(\frac{1}{m}) , &\text{if $\left|z\right|\leq \frac{1}{m}$;}
\end{cases}
\,\,\,\,\,\,\,  \text{and} \,\,\,\,\,\,
b_{m}(z)=
\begin{cases}
 b_{}(z) , &\text{if $\left|z\right|\leq {m}$;}\\
\frac{z}{m}\,b({m}) , &\text{if $\left|z\right|\geq {m}$,}
\end{cases}
\end{equation*}
where  the functions $a$ and  $b$ are defined in \eqref{abapex} in Appendix. Moreover,  set $f_{m}(u)=b_{m}(u)-a_{m}(u)$  for  $u\in H^{1}(\mathbb{R})$. We remark that  the function $f_{m}$ is globally Lipschitz $L^{2}(\mathbb{R})\rightarrow L^{2}(\mathbb{R})$. For a given initial data $u_{0}\in H^{1}(\mathbb{R})$, we consider the following  regularized Cauchy problem
\begin{equation}
\label{3SAP}
 \left\{
\begin{array}{ll}
i\partial_{t}u^{m}+ (\partial^{2}_{x} + \gamma\delta(x))u^{m}+f_{m}(u^{m})=0,\\
u^{m}(0)=u_{0}.
\end{array}\right.
\end{equation}
%Next, we remark that $ H_\gamma$ defined in \eqref{exten} satisfies $ H_\gamma\geq -m$, where $m = \gamma^2/4$ if $\gamma > 0$, and $m = 0$ if $\gamma < 0$. Thus, $A =- H_\gamma-m $ is a self-adjoint negative operator on $X = L^2(\mathbb{R})$ on the  domain $\mathrm{dom}(A) = \mathrm{dom}(H_\gamma)$. Moreover, in our case the  norm
%$$
%||v||^2_{X_A}=||v'||_{L^2}^2+(m+1)||v||_{L^2}^2-\gamma|v(0)|^2,
%$$
%is  equivalent to the usual $H^1(\mathbb{R})$-norm. Therefore, from  \cite[Theorem 3.7.1 and Corollary 3.3.11]{CB}, 
Applying Proposition \ref{APCS}, we  see that  for every $m\in \mathbb{N}$  there exist a unique global (weak) solution $u^{m}\in C(\mathbb{R}, H^{1}(\mathbb{R}))\cap C^{1}(\mathbb{R},H^{-1}(\mathbb{R}) )$ of \eqref{3SAP} which satisfies 
\begin{equation*}
E_{m}(u^{m}(t))=E_{m}(u_{0}) \,\, \mbox{and} \,\, \left\|u^{m}(t)\right\|^{2}_{L^{2}}=\left\|u_{0}\right\|^{2}_{L^{2}},  \,\, \text{ for all $t\in \mathbb{R}$},
\end{equation*}
where \begin{equation*}
E_{m}(u)=\frac{1}{2}\left\|\partial_{x} u^{}(t)\right\|^{2}_{L^{2}}-\frac{\gamma}{2}\left|u^{}(0,t)\right|^{2}  +\frac{1}{2}\int_{\mathbb{R}}\Phi_{m}(\left|u_{}\right|)dx-\frac{1}{2}\int_{\mathbb{R}}\Psi_{m}(\left|u_{}\right|)dx,
\end{equation*}
and the functions $\Phi_{m}$ and $\Psi_{m}$  are defined by
\begin{equation*}
\Phi_{m}(z)=\frac{1}{2}\int^{\left|z\right|}_{0}a_{m}(s)ds\,\,\,\ \mbox{and} \,\,\,\ \Psi_{m}(z)=\frac{1}{2}\int^{\left|z\right|}_{0}b_{m}(s)ds.
\end{equation*}
Arguing in the same way as in the proof of  Step 2 of \cite[Theorem 9.3.4]{CB} we deduce that the sequence of approximating solutions $u^{m}$ is bounded in the space $L^{\infty}(\mathbb{R}, H^{1}(\mathbb{R}))$. It also follows from the NLS equation \eqref{3SAP} that the sequence $\left\{\left.u^{m}\right|_{\Omega_{k}}\right\}_{m\in\mathbb{N}}$ is bounded in the space $W^{1,\infty}(\mathbb{R}, H^{-1}(\Omega_{k}))$, where $\Omega_{k}=(0,k)$. Therefore, we have that  $\left\{u^{{m}}\right\}_{m\in\mathbb{N}}$ satisfies the assumptions of  Lemma 9.3.6 in \cite{CB}. Let $u$ be the limit of $u^{m}$. 

We show that the limiting function $u\in L^{\infty}(\mathbb{R},H^{1}(\mathbb{R}))$ is a weak solution of \eqref{00NL}.  We first write  a weak formulation of the NLS equation \eqref{3SAP}. Indeed, for any test  functions $\psi\in C^{\infty}_{0}({\mathbb{R}_x})$  and $\phi\in C^{\infty}_{0}({\mathbb{R}_t})$, we have
\begin{equation}\label{3DPL}
-\int_{\mathbb{R}}\left[\left\langle i\, u^{m}, \psi\right\rangle \phi^{\prime}(t)+\mathfrak{t_{\gamma}}(u^{m},\psi) \phi^{}(t)\right]\,dt+\int_{\mathbb{R}}\int_{\mathbb{R}}f_{m}(u^{m})\psi(x)\phi(t)\,dx\,dt=0.
\end{equation}

We pass to the limit as $n\rightarrow\infty$ in  the integral formulation \eqref{3DPL} and obtain the following integral equation (see proof of Step 3 of \cite[Theorem 9.3.4]{CB}),
\begin{equation}\label{ert}
-\int_{\mathbb{R}}\left[\left\langle i\, u, \psi\right\rangle \phi^{\prime}(t)+\mathfrak{t_{\gamma}}(u,\psi) \phi^{}(t)\right]\,dt+\int_{\mathbb{R}}\int_{\mathbb{R}}u\, \mbox{Log}\left|u\right|^{2}\psi(x)\phi(t)\,dx\,dt=0.
\end{equation}
Moreover,  it is easy to see that $u\in{L^{\infty}(\mathbb{R}, L^{A}(\mathbb{R}))}$ and  $u(0)=u_{0}$. Therefore, by integral equation \eqref{ert},  $u\in{L^{\infty}(\mathbb{R}, W(\mathbb{R}))}$ is a weak solution of the equation \eqref{00NL}. In particular, from Lemma \ref{APEX23} in Appendix, we deduce that $u\in W^{1,\infty}(\mathbb{R}, W^{\prime}(\mathbb{R}))$. 

Now we show uniqueness the solution in the class $L^{\infty}(\mathbb{R}, W^{}(\mathbb{R}))\cap W^{1,\infty}(\mathbb{R}, W^{\prime}(\mathbb{R}))$. Indeed, let $u$ and $v$ be two solutions of \eqref{00NL} in that class. On taking the difference of the two equations and taking the $W(\mathbb{R})-W^{\prime}(\mathbb{R})$ duality product with $i(u-u)$, we see that
\begin{equation*}
\left\langle u_{t}-v_{t}, u-v\right\rangle_{W(\mathbb{R}),W^{\prime}(\mathbb{R})}=-\Im \int_{\mathbb{R}}\left(u\mathrm{Log}\left|u \right|^{2}-v\mathrm{Log}\left|v \right|^{2} \right)(\overline{u}-\overline{v})dx.
\end{equation*}
Thus, from \cite[Lemma 9.3.5]{CB} we obtain
\begin{equation*}
\left\|u(t)-v(t)\right\|_{L^{2}}\leq 8\int^{t}_{0}\left\|u(s)-v(s)\right\|^{2}_{L^{2}}ds.
\end{equation*}
Therefore, the uniqueness of a solution follows by Gronwall's Lemma.  

We claim that the weak solution $u$ of  \eqref{00NL} satisfies both conservation of charge and energy. Indeed, by weak lower semicontinuity of the $H^{1}(\mathbb{R})$-norm, Fatou's lemma and arguing in the same way as in the proof of  Step 3 of \cite[Theorem 9.3.4]{CB} we deduce that
\begin{equation}\label{LON}
E(u(t))\leq E(u_{0}) \quad \mbox{and} \quad \left\|u^{}(t)\right\|^{2}_{L^{2}(\mathbb{R})}=\left\|u_{0}\right\|^{2}_{L^{2}} \quad \text{for all} \,\, t\in \mathbb{R}.
\end{equation}
Now fix $t_{0}\in \mathbb{R}$. Let $\varphi_0=u(t_{0})$ and let $w$ be the solution of \eqref{00NL} with $w(0)=\varphi_0$. By uniqueness, we see that $w(\cdot-t_{0})=u(\cdot)$ on $\mathbb{R}$. From \eqref{LON}, we deduce in particular that  
\begin{equation*}
E(u_{0})\leq E(\varphi_0)= E(u(t_{0})).
\end{equation*}
Therefore, we have that both $\left\|u^{}(t)\right\|^{2}_{L^{2}}$ and $E(u(t))$ are constant on $\mathbb{R}$.
Finally, the inclusion $u\in C(\mathbb{R}, W(\mathbb{R}))\cap C^{1}(\mathbb{R}, W^{\prime}(\mathbb{R}))$ follows from conservation laws. This completes the proof.
\end{proof}

\section{Stationary problem}
\label{S:2}

This section is devoted to show that the following set,  
\begin{equation*}
\mathcal A_{\omega, \gamma}=\{\varphi\in W(\mathbb R)\setminus\{0\}: \varphi\; \text{is solution of the stationary problem (\ref{eq2}) in}\; W'(\mathbb R)\}
\end{equation*}
it is given (modulo rotations) by  $\phi_{\omega, \gamma}$ in (\ref{gausson}). More exactly we have the following result.

\begin{proposition} \label{UDGS}
Let $\gamma\in\mathbb{R}\setminus \left\{0 \right\}$ and $\omega\in \mathbb{R}$. Then  \eqref{eq2} has a unique nonnegative nontrivial solution. Thus,  $\phi_{\omega, \gamma}$ is this solution and therefore $\mathcal{A}_{\omega,\gamma}=\left\{e^{i\theta}\phi_{\omega, \gamma}: \theta\in\mathbb{R} \right\}$.
\end{proposition}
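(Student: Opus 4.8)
The plan is to reduce \eqref{eq2} to an autonomous second-order ODE on each of the half-lines $(0,\infty)$ and $(-\infty,0)$, integrate it explicitly by means of a first integral, and then glue the two half-line profiles using the continuity and jump conditions encoded in $\mathrm{dom}(H_\gamma)$. Throughout I treat a nonnegative real solution $\varphi\geq 0$, since that is the content of the statement; the identity $\mathcal A_{\omega,\gamma}=\{e^{i\theta}\phi_{\omega,\gamma}\}$ will follow at the end from a standard constant-phase reduction.

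First I would settle regularity, decay and the interface conditions. Since $\varphi\in W(\mathbb R)\subset H^{1}(\mathbb R)$, it is continuous and bounded and satisfies $\varphi(x)\to 0$ as $|x|\to\infty$; moreover $s\mapsto s\,\mathrm{Log}\,s^{2}$ is bounded near $s=0$, so $\varphi\,\mathrm{Log}\,\varphi^{2}\in L^{2}_{\mathrm{loc}}(\mathbb R\setminus\{0\})$. Testing \eqref{eq2} against functions supported in $\mathbb R\setminus\{0\}$ and bootstrapping then gives $\varphi\in C^{2}(\mathbb R\setminus\{0\})$ solving the classical equation $-\varphi''+\omega\varphi-\varphi\,\mathrm{Log}\,\varphi^{2}=0$ for $x\neq 0$, while testing against functions that do not vanish at the origin isolates the $\gamma\delta$ contribution and yields the jump relation $\varphi'(0+)-\varphi'(0-)=-\gamma\varphi(0)$ together with continuity at $0$.

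The core step is the first integral. On any maximal open interval where $\varphi>0$, multiplying the ODE by $\varphi'$ and integrating gives
\begin{equation*}
(\varphi')^{2}=\varphi^{2}\bigl(\omega+1-\mathrm{Log}\,\varphi^{2}\bigr)+C .
\end{equation*}
On the interval reaching $+\infty$ the constant $C$ is forced to vanish: as $\varphi\to 0$ the right-hand side would otherwise either stay bounded away from $0$ (making $\varphi$ monotone with unbounded range, contradicting boundedness) or become negative (contradicting $(\varphi')^{2}\geq 0$). With $C=0$, the substitution $w=\mathrm{Log}\,\varphi$ turns the relation into $(w')^{2}=\omega+1-2w$, hence $w''=-1$, so $w$ is a downward parabola and
\begin{equation*}
\varphi(x)=e^{\frac{\omega+1}{2}}\,e^{-\frac{1}{2}(x-b)^{2}}
\end{equation*}
for some $b\in\mathbb R$. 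Since this profile never vanishes, $\varphi$ can have no interior zero on $(0,\infty)$: a maximal positivity interval with a finite endpoint would force $C=0$ there as well (using $\varphi=\varphi'=0$ at the endpoint) and hence the above strictly positive form, contradicting vanishing at that endpoint. Thus $\varphi>0$ and has the Gausson form on all of $(0,\infty)$, with some parameter $b_{+}$, and symmetrically on $(-\infty,0)$ with parameter $b_{-}$; in particular $\varphi(0)>0$.

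It remains to match. Continuity at $0$ gives $b_{+}^{2}=b_{-}^{2}$, and since $\varphi'(0\pm)=b_{\pm}\varphi(0)$ the jump condition reads $(b_{+}-b_{-})\varphi(0)=-\gamma\varphi(0)$, i.e. $b_{+}-b_{-}=-\gamma$. Because $\gamma\neq 0$, the case $b_{-}=b_{+}$ is impossible, leaving $b_{-}=-b_{+}$ and hence $b_{+}=-\gamma/2$, $b_{-}=\gamma/2$; writing $(x\mp\gamma/2)^{2}=(|x|+\gamma/2)^{2}$ on $\pm x>0$ recovers exactly \eqref{gausson}. Finally, for a general (complex) $\varphi\in\mathcal A_{\omega,\gamma}$, the quantity $\mathrm{Im}(\overline{\varphi}\,\varphi')$ is constant on each half-line, vanishes at infinity, and is unchanged across $0$ (the jump of $\varphi'$ is parallel to $\varphi(0)$), so its global value is $0$; this forces a constant phase, reducing to the nonnegative case and giving $\mathcal A_{\omega,\gamma}=\{e^{i\theta}\phi_{\omega,\gamma}\}$. \emph{The main obstacle} I anticipate is the soft analytic groundwork preceding the explicit integration: justifying the passage from the $W'(\mathbb R)$-weak formulation to a classical ODE with the precise jump condition, and—because the nonlinearity $s\mapsto s\,\mathrm{Log}\,s^{2}$ is not Lipschitz at $s=0$—rigorously excluding solutions that touch zero at an interior point; the phase-plane computation itself is then elementary.
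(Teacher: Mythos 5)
Your proposal is correct, and while it shares the paper's overall skeleton (regularity and the jump condition, reduction to a nonnegative profile, the first integral $(\varphi')^{2}=\varphi^{2}(\omega+1-\mathrm{Log}\,\varphi^{2})+C$ with $C=0$ forced by decay), it resolves the key uniqueness step by a genuinely different mechanism. The paper evaluates the first integral at $0\pm$ to get $\tfrac12(v'(0\pm))^{2}+F(v(0\pm))=0$, deduces $v'(0+)=-\tfrac{\gamma}{2}v(0)$ after ruling out $v'(0-)=v'(0+)$, pins down $v(0)=e^{\frac{\omega+1}{2}}e^{-\gamma^{2}/8}$ as the unique positive zero of an explicit function $P(c)$, and then invokes uniqueness of the initial value problem on each half-line to identify $v$ with $\phi_{\omega,\gamma}$. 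You instead integrate the ODE in closed form via $w=\mathrm{Log}\,\varphi$ (the first integral plus the equation give $w''=-1$), obtaining a one-parameter Gaussian family on each half-line and matching the parameters $b_{\pm}$ through continuity and the jump condition; this buys you the explicit solution without appealing to an abstract ODE uniqueness theorem (whose application requires some care since $s\mapsto s\,\mathrm{Log}\,s^{2}$ is not Lipschitz at $0$), and your first-integral exclusion of interior zeros makes rigorous a point the paper leaves implicit. Conversely, the paper outsources the reduction to a positive real profile to a cited lemma of Kaminaga--Ohta, whereas you sketch the $\mathrm{Im}(\overline{\varphi}\,\varphi')$ Wronskian argument; there you should note that concluding a single global constant phase uses that $|\varphi|$ has no zeros, which for a general complex solution requires rerunning your no-interior-zero argument for $|\varphi|$ on each component where $\varphi\neq 0$ --- a gap that your own machinery fills, so it is a matter of presentation rather than substance.
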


For $\gamma=0$, the set of solutions  of the stationary problem \eqref{eq2} is well known. In particular, modulo translation and phase, there exist a unique positive solution, which is explicitly known. Indeed, $\mathcal{A}_{\omega,0}=\left\{e^{i\theta}\phi_{\omega,0}(\cdot-y); \theta\in\mathbb{R}, y\in\mathbb{R}\right\}$ (see, for example, \cite[Appendix D]{CAS}). \\

Before to give the proof of Proposition \ref{UDGS}, we have the following  basic properties of the solutions of \eqref{eq2}.

\begin{lemma} \label{LCU}
 Let $\gamma\in\mathbb{R}\setminus \left\{0\right\}$,  $\omega\in \mathbb{R}$ and $v\in\mathcal{A}_{\omega,\gamma}$. Then, $v$ verifies the following:
\begin{align}
& v\in C^{j}(\mathbb{R}\setminus \left\{0 \right\})\cap C^{}(\mathbb{R}), \,\,\,\,\, j=1, 2, \label{1} \\
& -v''+\omega v-v\,  \mathrm{Log}\left|v \right|^{2}=0,\,\,\,  \mbox{on}\,\,\, \mathbb{R}\setminus \left\{0 \right\},\label{2}  \\
& v'(0+) -v'(0-) =- \gamma v(0), \label{3}\\
&  v'(x), v(x)\rightarrow 0,  \,\,\,  \mbox{as}\,\,\, \left|x\right|\rightarrow\infty. \label{4}
\end{align}
\end{lemma}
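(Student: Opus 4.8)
The plan is to derive all four properties from the single hypothesis that $v\in W(\mathbb{R})\subset H^1(\mathbb{R})$ is a distributional solution of \eqref{eq2} in $W'(\mathbb{R})$, combining elliptic bootstrapping away from the defect with a localized integration by parts at the origin. The starting point is the Sobolev embedding $H^1(\mathbb{R})\hookrightarrow C_0(\mathbb{R})$: since $v\in W(\mathbb{R})\subset H^1(\mathbb{R})$, the function $v$ is continuous on all of $\mathbb{R}$ and $v(x)\to 0$ as $|x|\to\infty$, which already gives $v\in C(\mathbb{R})$ and the decay of $v$ in \eqref{4}. The second basic observation is that the map $z\mapsto z\,\mathrm{Log}|z|^2$ (extended by $0$ at $z=0$) is continuous on $\mathbb{C}$, so $h:=\omega v-v\,\mathrm{Log}|v|^2$ is a continuous function on $\mathbb{R}$.

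For the interior equation and higher regularity, I would note that away from the origin the term $\gamma\delta(x)v$ is absent, so $v$ solves $-v''+\omega v-v\,\mathrm{Log}|v|^2=0$ in $\mathcal{D}'(\mathbb{R}\setminus\{0\})$, that is $v''=h$ in the sense of distributions. Since $h$ is continuous, and a distribution whose second derivative is a continuous function is classically twice continuously differentiable, I obtain $v\in C^2(\mathbb{R}\setminus\{0\})$; together with $v\in C(\mathbb{R})$ this establishes \eqref{1}, and \eqref{2} then holds pointwise on $\mathbb{R}\setminus\{0\}$.

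To extract the jump condition \eqref{3}, I would test the weak formulation of \eqref{eq2} against $\psi\in C_0^\infty(\mathbb{R})$ and integrate by parts separately on $(-\infty,0)$ and $(0,\infty)$, which is now justified by the $C^2$ regularity. The split integration produces the boundary contribution $(v'(0-)-v'(0+))\psi(0)$, the interior integrals cancel by \eqref{2}, and the delta interaction contributes $-\gamma v(0)\psi(0)$; what remains is $(v'(0-)-v'(0+)-\gamma v(0))\psi(0)=0$. Choosing $\psi$ with $\psi(0)\neq 0$ (and replacing $\psi$ by $i\psi$ to account for the real-part structure of the form $\mathfrak{t}_\gamma$ in \eqref{UKI}) yields exactly $v'(0+)-v'(0-)=-\gamma v(0)$.

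It remains to prove $v'(x)\to 0$ in \eqref{4}, which I expect to be the main obstacle, since it is the only statement not immediate from regularity or the Sobolev embedding. From \eqref{2} we have $v''=h\to 0$ as $|x|\to\infty$ because $v\to 0$, while $v'\in L^2(\mathbb{R})$ since $v\in H^1(\mathbb{R})$. A function that lies in $L^2$ near infinity and whose derivative tends to $0$ must itself tend to $0$: if $|v'(x_n)|\geq\varepsilon$ along some $x_n\to\infty$, then $v''\to 0$ forces $|v'|\geq\varepsilon/2$ on intervals of a fixed positive length around each $x_n$, contradicting $v'\in L^2$. (In the real positive case one may instead use the first integral $\tfrac12|v'|^2=\tfrac{\omega+1}{2}|v|^2-\tfrac12|v|^2\,\mathrm{Log}|v|^2+C$, whose right-hand side tends to $C$; since $v'\in L^2$ forces $\liminf|v'|=0$, one gets $C=0$ and hence $v'\to 0$.) The only delicate points throughout are this decay argument and the careful use of the continuity of $z\mapsto z\,\mathrm{Log}|z|^2$ at the zeros of $v$.
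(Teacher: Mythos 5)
Your proof is correct and follows essentially the same route as the paper: interior regularity from the distributional identity $v''=\omega v-v\,\mathrm{Log}|v|^{2}$ away from the origin, the jump condition by testing the weak formulation against functions not vanishing at $0$ (using both $\psi$ and $i\psi$) and integrating by parts on each half-line, and the decay of $v'$ from $v\to 0$, $v''\to 0$ and $v'\in L^{2}$. The only cosmetic differences are that the paper runs the regularity step as an $H^{2}$ bootstrap with cutoffs $\xi\in C_{0}^{\infty}(\mathbb{R}\setminus\{0\})$ and obtains the jump condition by shrinking the support of the test function, whereas you invoke the continuity of $z\mapsto z\,\mathrm{Log}|z|^{2}$ directly and keep a fixed test function; both are valid, and your argument for $v'(x)\to 0$ supplies a detail the paper leaves implicit.
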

\begin{proof}
The proof of item \eqref{1} follows
 by a standard bootstrap argument using test functions $\xi \in C^{\infty}_{0}(\mathbb{R} \setminus \left\{0 \right\})$ (see, for example, \cite[Chapter 8]{CB}). Indeed, from \eqref{eq2} applied with $\xi\,v $  we deduce that 
\begin{equation*}
-(\xi v)''+\omega \xi v=-\xi''v-2 \xi' v'+\xi v\, \mbox{Log}(|v|^{2}), 
\end{equation*}
in the sense of distributions on $\mathbb{R}\setminus \left\{0 \right\}$. The right hand side is in ${L^{2}(\mathbb{R})}$  and so $\xi v\in H^{2}(\mathbb{R})$. This implies that $v$ is in $C^{2}(\mathbb{R}\setminus \left\{0 \right\})$ and it is a classical solution of this equation on $\mathbb{R}\setminus \left\{0 \right\}$, from which \eqref{1} and \eqref{2} follow. To prove \eqref{3}, we consider $\xi\in W(\mathbb R)$ such that $\xi \in C^{\infty}_{0}(\mathbb{R})$ and for $\epsilon>0$ we have $supp\; \xi\subset [-\epsilon, \epsilon]$ and $\xi(0)=1$. Therefore, by considering $v$ and $\xi$ real valued-functions without loss of generality, we have  for $0<\delta<\epsilon$ that
\begin{equation}
\begin{aligned}
&0=\langle -v''-\gamma \delta(x) v+\omega v-v\mathrm{Log}|v|^2, \xi\rangle\\
&=\lim_{\delta\to 0}\int_{-\epsilon}^{-\delta} v'(x)\xi'
(x)dx+\lim_{\delta\to 0}\int_{\delta}^{\epsilon}v'(x)\xi'
(x)dx-\gamma v(0)\xi(0)\\
& + \omega \int_{-\epsilon}^\epsilon v(x) \xi(x) dx- \int_{-\epsilon}^\epsilon v(x) \mathrm{Log} |v(x)|^2\xi(x) dx\\
&=v'(0-)-v'(0+)-\int_{-\epsilon}^{0-} v''\xi dx-\int_{0+}^\epsilon v''\xi dx-\gamma v(0)+ \omega \int_{-\epsilon}^\epsilon v(x) \xi(x) dx\\
&- \int_{-\epsilon}^\epsilon v \mathrm{Log}|v|^2\,\xi dx \to v'(0-)-v'(0+)-\gamma v(0),
\end{aligned}
\end{equation}
as $\epsilon\to 0$. This proof the jump condition. Finally, since $v\in H^{1}(\mathbb{R}^{})$, it follows that $v(x)\rightarrow 0$ as $\left|x\right|\rightarrow \infty$. Thus, by \eqref{2}, $v^{\prime\prime}(x)\rightarrow 0$ as $\left|x\right|\rightarrow \infty$, and so $v^{\prime}(x)\rightarrow 0$ as $\left|x\right|\rightarrow \infty$. This completes the proof of Lemma \ref{LCU}.
\end{proof}

\begin{remark} An example of test function $\xi\in W(\mathbb R)$ used in the proof of Lemma  \ref{LCU} with the properties that for $\epsilon>0$ we have $supp\; \xi\subset [-\epsilon, \epsilon]$ and $\xi(0)=1$, it is the following
\begin{equation*}
\xi(x)=
\begin{cases}
  e\,e^{\frac{\epsilon^2}{x^2-\epsilon^2}},\;\;\,\;\; \text{if}\;\; |x|<\epsilon\\
0,\quad\;\;\;\;\;\;\;\;\;\;\text{if}\;\; |x|\geqq \epsilon.
\end{cases}
\end{equation*}

\end{remark}

Now we give the proof of  Proposition \ref{UDGS}. Our proof is inspired by the techniques of \cite[Lemma 26]{RJJ}.

\begin{proof}[ {\bf{Proof of Proposition \ref{UDGS}}}] By construction, we have that  $\phi_{\omega, \gamma}\in\mathcal{A}_{\omega,\gamma}$, and so 
$$
\left\{e^{i\theta}\phi_{\omega, \gamma}: \theta\in\mathbb{R} \right\}\subseteq \mathcal{A}_{\omega,\gamma}.
$$ 
Now, let $\varphi\in\mathcal{A}_{\omega,\gamma}$.  Arguing as in  \cite[Lemma 3]{KEO} we can show that there exist $\theta_{0}\in\mathbb{R}$ and a positive function $v$  such that $\varphi(x)=e^{i\theta_{0}}v(x)$ for all $x\in\mathbb{R}$. We will prove that $v(x)=\phi_{\omega, \gamma}(x)$. Indeed, it is clear, by Lemma \ref{LCU}, that the properties  \eqref{1}-\eqref{4} also hold for $v(x)$. Let  $f(s)=-\omega s+s\, \mbox{Log}(s^{2}) $ and $F(s)=\int^{s}_{0}f(t)dt$. Multiplying the equation \eqref{2} by $v'(x)$ and integrating from $x=0$ and $R>0$ yields
\begin{equation*}
-\frac{1}{2}\left\{v'(R)\right\}^{2}+\frac{1}{2}\left\{v'(0+)\right\}^{2}-F(v(R))+F(v(0+))=0.
\end{equation*}
Now, letting $R\rightarrow\infty$, we have
\begin{equation}
\label{C1}
\frac{1}{2}\left\{v'(0+)\right\}^{2}+F(v(0+))=0.
\end{equation}
Arguing in the same way on $\left(-\infty,\, 0\right]$, we conclude that
\begin{equation}
\label{C145}
\frac{1}{2}\left\{v'(0-)\right\}^{2}+F(v(0-))=0.
\end{equation}
Since $v\in W(\mathbb{R})$, $v$ is continuous at 0, thus we must have  $\left|v'(0-)\right|=\left|v'
(0+)\right|$.
Now, if we suppose that $v'(0-)=v'(0+)$ then $v(0)=0$ by \eqref{3}. Then for the case $v'(0-)=v'(0+)=0$  we obtain  immediately $v\equiv 0$ on $\mathbb{R}$.  On the other hand, for $v'(0-)= v'(0+)\neq0$ then $v$ becomes negative  close to $0$. Therefore, since $v$ is a positive solution, this is a contradiction. Hence, we need to have  $v'(0-)=-v'(0+)$, from which we infer immediately that 
\begin{equation}
\label{C2}
v'(0+)=-\frac{\gamma}{2}v(0).
\end{equation}
For $c>0$, we set
\begin{equation*}
P(c)=\frac{\gamma^{2}-4\omega-4}{8}c^{2} +c^{2}\mbox{Log}(c^{}).
\end{equation*}
It is clear that this function has a unique zero $c_{0}>0$, $c_{0}=e^{\frac{\omega+1}{2}}e^{-\frac{\gamma^{2}}{8}}$. Direct computations show that, by \eqref{C1} and \eqref{C2}, the function $v$ in $x=0$ satisfies  $P(v(0))=0 $. Therefore,  
\begin{equation}
\label{C3}
v(0)=c_{0}=e^{\frac{\omega+1}{2}}e^{-\frac{\gamma^{2}}{8}}.
\end{equation}
We note that the initial value problem on $\left(0,\, \infty\right)$ for the equation \eqref{2} with \eqref{C3} and \eqref{C2} as initial conditions has a unique solution. Indeed, the solution is unique for $x>0$ close to $0$ since $f\in C\left(\left[0,\,\infty\right)\right)\cap C^{1}\left(\left(0,\,\infty\right)\right)$. A similar argument can be applied on $\left(-\infty,\, 0\right)$, thus the solution of \eqref{2} is unique in $\left(-\infty,\, 0\right)$. Moreover, we remark that $v(0)=\phi_{\omega, \gamma}(0)$. By the uniqueness, we see that  $v(x)=\phi_{\omega, \gamma}(x)$ on $\mathbb{R}$. This proves $\mathcal{A}_{\omega,\gamma}\subseteq\left\{e^{i\theta}\phi_{\omega, \gamma}: \theta\in\mathbb{R} \right\}$.
\end{proof}

\section{Existence of a ground state}
\label{S:3}

The idea of this section is to give a variational characterization of the stationary solutions for (\ref{eq2}) . It characterization will be used in the stability theory in $W(\mathbb R)$ of the orbit generated by $\phi_{\omega, \gamma}$, $\mathcal A_{\omega, \gamma}$. In order to establish our main result  (Theorem 1.3) we need to establish some preliminaries.

\begin{definition}
For $\gamma\in\mathbb{R}$ and $\omega\in \mathbb{R}$, we define the following functionals of class $C^{1}$ on $W(\mathbb{R})$:
\begin{align*}
 S_{\omega,\gamma}(u)&=\frac{1}{2} \|\partial_{x}u \|^{2}_{L^{2}}+ \frac{\omega+1}{2}\|u \|^{2}_{L^{2}} -\frac{\gamma}{2}\left|u(0)\right|^{2}  -\frac{1}{2}\int_{\mathbb{R}}\left|u\right|^{2}\mbox{Log}\left|u\right|^{2}dx,\\
 I_{\omega,\gamma}(u)&= \|\partial_{x}u \|^{2}_{L^{2}}+ \omega\,\|u \|^{2}_{L^{2}}-\gamma\left|u(0)\right|^{2}-\int_{\mathbb{R}}\left|u\right|^{2}\mbox{Log}\left|u\right|^{2}dx.
\end{align*}
\end{definition}
We note that for $u\in W(\mathbb{R})$ the  derivative of $S_{\omega,\gamma}$ in $u$ is given by
\begin{equation*}
S_{\omega,\gamma}^{\prime}(u)= -\partial^{2}_{x} u -\gamma\delta(x)u +\omega u-u\, \mbox{Log}\left|u\right|^{2},
\end{equation*}
in the sense that for $h\in W(\mathbb{R})$,
$$
S_{\omega,\gamma}^{\prime}(u)(h)= \langle S_{\omega,\gamma}^{\prime}(u), h\rangle= \Re\Big [ \int_{\mathbb R} u'\overline{h'}dx - \int_{\mathbb R} u\overline{h}\mbox{Log} |u|^2dx +\omega  \int_{\mathbb R} u\overline{h}dx\Big ]-\gamma \Re (u(0)\overline{h(0)}).
$$
Therefore, from Lemma \ref{LCU} we have immediately that $\varphi \in \mathcal A_{\omega, \gamma}$  if and only if  $\varphi\in W(\mathbb{R})\setminus\left\{0\right\}$  and $S_{\omega,\gamma}^{\prime}(\varphi)=0$. Indeed, since for $\varphi \in \mathcal A_{\omega, \gamma}$ we have for every $h\in W(\mathbb{R})$
$$
 \Re\int_{\mathbb R} \varphi'\overline{h'}dx= \Re[(\varphi'(0-)-\varphi'(0+))\overline{h(0)}]-\int_{-\infty}^{0-}  \varphi''\overline{h}dx- \int_{0+}^{+\infty}  \varphi''\overline{h}dx,
$$
we obtain immediately from  (\ref{2})-(\ref{3}) that $S_{\omega,\gamma}^{\prime}(\varphi)(h)=0$. The other implication is trivial.
\\

Next, we consider the minimization problem
\begin{align}
\begin{split}\label{MPE}
d_{\gamma}(\omega)&={\inf}\left\{S_{\omega,\gamma}(u):\, u\in W(\mathbb{R}^{})  \setminus  \left\{0 \right\},  I_{\omega,\gamma}(u)=0\right\} \\ 
&=\frac{1}{2}\,{\inf}\left\{\left\|u\right\|_{L^{2}}^{2}:u\in  W(\mathbb{R}^{}) \setminus \left\{0 \right\},  I_{\omega,\gamma}(u)= 0 \right\}, 
\end{split}
\end{align}
and define the set of ground states  by
\begin{equation*}
 \mathcal{N}_{\omega,\gamma}=\left\{ \varphi\in W(\mathbb{R}^{}): S_{\omega,\gamma}(\varphi)=d_{\gamma}(\omega), \,\, I_{\omega,\gamma}(\varphi)=0\right\}.
\end{equation*}
\begin{remark}
The set $\left\{u\in W(\mathbb{R}^{}) \setminus  \left\{0 \right\}:  I_{\omega,\gamma}(u)=0\right\}$ is called the Nehari manifold. By definition,  we have $I_{\omega,\gamma}(u)=\left\langle S_{\omega,\gamma}^{\prime}(u),u\right\rangle$. Thus, the above set is a one-codimension manifold that contains all stationary point of $S_{\omega,\gamma}$.
\end{remark}
\begin{remark}\label{RM}
We have the relation $\mathcal{N}_{\omega,\gamma}\subseteq \mathcal{A}_{\omega,\gamma}$. Indeed, let $u\in \mathcal{N}_{\omega,\gamma}$.  Then, there is a Lagrange multiplier  $\Lambda\in \mathbb{R}$ such that $S^{\prime}_{\omega,\gamma}( u)=\Lambda I^{\prime}_{\omega,\gamma}(u)$. Thus, we have $\left\langle S^{\prime}_{\omega,\gamma}( u),u\right\rangle=\Lambda\left\langle  I^{\prime}_{\omega,\gamma}(u),u\right\rangle$. The fact that   $\left\langle S^{\prime}_{\omega,\gamma}( u),u\right\rangle =I_{\omega,\gamma}(u)=0$ and $\left\langle  I^{\prime}_{\omega,\gamma}(u),u\right\rangle= -2\left\|u_{}\right\|_{L^{2}(\mathbb{R})}^{2}<0$, implies $\Lambda=0$; that is, $S_{\omega,\gamma}^{\prime}(u)=0$ and so $u\in \mathcal{A}_{\omega,\gamma}$. 
\end{remark}
For $\gamma>0$, the existence of minimizers for \eqref{MPE}  is obtained through variational techniques (see \cite{ADNV}, \cite{RJJ}, \cite{FO}) . More precisely, we will show the following theorem.
\begin{theorem} \label{ESSW}
Let  $\gamma>0$ . There exists a minimizer of $d_{\gamma}(\omega)$ for any $\omega \in \mathbb{R}$. In addition, the infimum is achieved at solutions to \eqref{eq2}. More precisely, the set of minimizers for the problem \eqref{MPE} is  given by $\mathcal{N}_{\omega,\gamma}=\left\{e^{i\theta}\phi_{\omega, \gamma}: \theta\in\mathbb{R} \right\}$.
\end{theorem}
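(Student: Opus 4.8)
The plan is to prove Theorem \ref{ESSW} in three movements: establish that the infimum $d_\gamma(\omega)$ is strictly positive and that the Nehari constraint set is nonempty, then extract a minimizing sequence and pass to a limit, and finally identify the minimizer with $\phi_{\omega,\gamma}$ using the earlier uniqueness result. First I would observe that on the Nehari manifold $I_{\omega,\gamma}(u)=0$ the action reduces to $S_{\omega,\gamma}(u)=\tfrac12\|u\|_{L^2}^2$, as already recorded in the second line of \eqref{MPE}; this identity is the workhorse, since it converts the variational problem into minimizing the $L^2$-norm subject to the constraint. To see the constraint set is nonempty and that $d_\gamma(\omega)>0$, I would note that for any fixed $u\neq 0$ the scaling $u\mapsto \lambda u$ changes $I_{\omega,\gamma}(\lambda u)=\lambda^2\big(\|\partial_x u\|_{L^2}^2+(\omega-\log\lambda^2)\|u\|_{L^2}^2-\gamma|u(0)|^2-\int|u|^2\mathrm{Log}|u|^2\big)$, which becomes negative as $\lambda\to\infty$ because of the $-\log\lambda^2$ term; hence there is a unique $\lambda_*>0$ with $I_{\omega,\gamma}(\lambda_* u)=0$, so the manifold is nonempty, and a lower bound on $\|\lambda_* u\|_{L^2}^2$ shows $d_\gamma(\omega)>0$.

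Next I would take a minimizing sequence $\{u_n\}\subset W(\mathbb{R})$ with $I_{\omega,\gamma}(u_n)=0$ and $S_{\omega,\gamma}(u_n)=\tfrac12\|u_n\|_{L^2}^2\to d_\gamma(\omega)$. The $L^2$-norms are thus bounded; the relation $\|\partial_x u_n\|_{L^2}^2 = \gamma|u_n(0)|^2+\int|u_n|^2\mathrm{Log}|u_n|^2-\omega\|u_n\|_{L^2}^2$ together with standard control of the logarithmic term and the trace $|u_n(0)|^2\lesssim \|u_n\|_{H^1}\|u_n\|_{L^2}$ gives an $H^1$-bound, and one also controls $\int|u_n|^2\mathrm{Log}|u_n|^2$ so the sequence is bounded in $W(\mathbb{R})$. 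Since $W(\mathbb{R})$ is reflexive (stated in the excerpt), I extract a weakly convergent subsequence $u_n\rightharpoonup \varphi$. The crucial point is that the problem is set on $\mathbb{R}$ with an attractive delta, which breaks translation invariance and pins the mass at the origin; I expect the main obstacle to be ruling out loss of compactness (vanishing or splitting) of the minimizing sequence. I would handle this by exploiting the attractive defect: the term $-\gamma|u(0)|^2$ with $\gamma>0$ rewards concentration at $x=0$, so a concentration-compactness or direct symmetric-rearrangement argument (replacing $u_n$ by its symmetric decreasing rearrangement, which does not increase $\|\partial_x u\|_{L^2}^2$, preserves the $L^2$-norm and the logarithmic integral, and does not decrease $|u(0)|^2$) should show the infimum is attained and that $\varphi\not\equiv 0$.

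Once weak convergence and nonvanishing are secured, I would verify that the weak limit $\varphi$ actually lies on the Nehari manifold and attains the infimum. By weak lower semicontinuity of $\|\partial_x\cdot\|_{L^2}^2$, continuity of the trace $u\mapsto u(0)$ on $H^1$, and the appropriate semicontinuity/continuity of the logarithmic functional on $W(\mathbb{R})$ (guaranteed by the $C^1$ property from the Appendix), one gets $I_{\omega,\gamma}(\varphi)\le 0$; if $I_{\omega,\gamma}(\varphi)<0$ one rescales by some $\lambda_*<1$ to land back on the manifold with strictly smaller $L^2$-norm, contradicting minimality, so in fact $I_{\omega,\gamma}(\varphi)=0$ and $S_{\omega,\gamma}(\varphi)=d_\gamma(\omega)$, i.e. $\varphi\in\mathcal{N}_{\omega,\gamma}$. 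Finally, by Remark \ref{RM} every element of $\mathcal{N}_{\omega,\gamma}$ solves \eqref{eq2}, hence lies in $\mathcal{A}_{\omega,\gamma}$, and by Proposition \ref{UDGS} the set $\mathcal{A}_{\omega,\gamma}$ equals $\{e^{i\theta}\phi_{\omega,\gamma}:\theta\in\mathbb{R}\}$; conversely $\phi_{\omega,\gamma}\in\mathcal{A}_{\omega,\gamma}$ together with $S_{\omega,\gamma}(\phi_{\omega,\gamma})=\tfrac12\|\phi_{\omega,\gamma}\|_{L^2}^2=d_\gamma(\omega)$ (since a ground state exists and all elements of $\mathcal{A}_{\omega,\gamma}$ have equal $L^2$-norm) shows $\phi_{\omega,\gamma}\in\mathcal{N}_{\omega,\gamma}$, giving the claimed identity $\mathcal{N}_{\omega,\gamma}=\{e^{i\theta}\phi_{\omega,\gamma}:\theta\in\mathbb{R}\}$.
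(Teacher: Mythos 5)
Your overall architecture (minimizing sequence, weak limit, rescaling $\lambda u$ back onto the Nehari manifold, and the final identification via Remark \ref{RM} and Proposition \ref{UDGS}) matches the paper's, but two steps in the middle do not hold up as written. The more serious one is your assertion that weak lower semicontinuity plus continuity of the trace yields $I_{\omega,\gamma}(\varphi)\le 0$ for the weak limit. Writing $-\int|u|^2\mathrm{Log}|u|^2=\int A(|u|)-\int B(|u|)$ with $A,B$ convex and nonnegative, Fatou gives $\int B(|\varphi|)\le\liminf\int B(|u_n|)$, which enters $I_{\omega,\gamma}$ with the \emph{wrong} sign, and for $\omega<0$ the term $\omega\|u\|_{L^2}^2$ also goes the wrong way under weak convergence; so $I_{\omega,\gamma}$ is not weakly upper semicontinuous and the case $I_{\omega,\gamma}(\varphi)>0$ cannot be dismissed this cheaply. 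The paper treats that case separately: it uses a Br\'ezis--Lieb splitting (Lemma \ref{L4} together with \eqref{2C11}--\eqref{2C12}) to get $I_{\omega,\gamma}(u_n-\varphi)\to -I_{\omega,\gamma}(\varphi)<0$, rescales $u_n-\varphi$ onto the manifold, and reaches the contradiction $d_\gamma(\omega)\le d_\gamma(\omega)-\tfrac12\|\varphi\|_{L^2}^2$. This step is entirely absent from your proposal.

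The second gap is the nonvanishing of the weak limit, which is the crux of the whole section. You propose concentration compactness or symmetric decreasing rearrangement but carry out neither; even after rearranging you would still have to exclude spreading of a radially decreasing minimizing sequence (this needs the logarithmic Sobolev inequality) and to verify that the rearranged, rescaled sequence remains admissible and minimizing. The paper's device is different: it first establishes the strict inequality $d_\gamma(\omega)<d_0(\omega)$ for $\gamma>0$ (Lemma \ref{L3}, exploiting $I_{\omega,\gamma}(\phi_{\omega,0})=-\gamma e^{\omega+1}<0$), and then shows that if $\varphi\equiv 0$ the compact local embedding forces $u_n(0)\to 0$, hence $I_{\omega,0}(u_n)\to 0$, so a rescaling produces competitors for $d_0(\omega)$ giving $d_0(\omega)\le d_\gamma(\omega)$, a contradiction. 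Your rearrangement idea is a plausible alternative in this one-dimensional attractive setting (and is precisely where $\gamma>0$ would enter, since $|u^*(0)|\ge|u(0)|$ only helps when the trace term has a favorable sign), but as stated it is a strategy, not a proof; to be acceptable it would need to be completed, or replaced by the strict comparison $d_\gamma(\omega)<d_0(\omega)$ that the paper proves.
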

\begin{remark}\label{RM1}
We note that  $d_{\gamma}(\omega)$ has no minimizer when $\gamma<0$. We can see  this by contradiction. Suppose that $u_{\omega,\gamma}$  is a minimizer of $d_{\gamma}(\omega)$. From  Remark \ref{RM}, it is clear that there exist $\theta_{0}\in\mathbb{R}$ such that $u_{\omega,\gamma}(x)=e^{i\theta_{0}}\phi_{\omega_{}, \gamma}(x)$. In particular, $\lim_{\left|x\right| \to \infty}\left|u_{\omega,\gamma}(x)\right|= 0$ and  $\left|u_{\omega,\gamma}(x)\right|>0$ on all $x\in\mathbb{R}$.  Now, let $\tau_{y}u_{\omega,\gamma}(x)\equiv u_{\omega,\gamma}(x-y)$ for any $y\in\mathbb{R}$. By direct computations, we see that 
$$I_{\omega,\gamma}(\tau_{y}u_{\omega,\gamma})-I_{\omega,\gamma}(u_{\omega_{}, \gamma})=\gamma [\left|\phi_{\omega_{}, \gamma}(0)\right|^{2}-\left|\phi_{\omega_{}, \gamma}(y)\right|^{2}],$$
 and therefore we have that $I_{\omega,\gamma}(\tau_{y}u_{\omega,\gamma})<0$ for $\left|y\right|$ sufficiently large.
%(see Figure \ref{caja} below).
Thus, there is $\lambda \in (0, 1)$ such that $I_{\omega,\gamma}(\lambda\tau_{y}u_{\omega,\gamma})=0$. Then, by \eqref{MPE} we have 
\begin{equation*}
d_{\gamma}(\omega)\leq\frac{1}{2}\left\|\lambda\,\tau_{y}u_{\omega,\gamma}\right\|_{L^{2}}^{2} <\frac{1}{2}\left\|\tau_{y}u_{\omega,\gamma}\right\|_{L^{2}}^{2}=\frac{1}{2}\left\|u_{\omega_{}, \gamma}\right\|_{L^{2}}^{2}= d_{\gamma}(\omega),
\end{equation*}
it which is a contradiction.
\end{remark}

%\begin{figure}[h]
%\centering
%%\fbox{\includegraphics[height=32mm]{figura1}}
%%\caption{$\phi_{\omega, \gamma}$ as a function of $x$, $\omega=1$ and $\gamma=-1$.}
%\label{caja}
%\end{figure}

In order to prove Theorem \ref{ESSW} we need several preliminary lemmas.  In the first lemma, we recall the logarithmic Sobolev inequality. For a proof we refer to \cite[Theorem 8.14]{ELL}.
\begin{lemma} \label{L1}
Let $f$ be any function in $ H^{1}(\mathbb{R})\setminus\{0\}$ and $\alpha$ be any positive number. Then,
\begin{equation*}
\int_{\mathbb{R}}\left|f(x)\right|^{2}\mathrm{Log}\left|f(x)\right|^{2}dx\leq \frac{\alpha^{2}}{\pi} \|f' \|^{2}_{L^{2}}+\left(\mathrm{Log} \|f \|^{2}_{L^{2}}-\left(1+\mathrm{Log}\,\alpha\right)\right)\|f \|^{2}_{L^{2}}.
\end{equation*}
\end{lemma}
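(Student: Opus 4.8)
The plan is to reduce the stated inequality to the sharp logarithmic Sobolev inequality for the standard Gaussian measure, and then to recover the free parameter $\alpha$ and the $L^2$-normalization by two elementary rescalings. I would begin with three reductions. Since the left-hand side and the factors $\|f\|_{L^2}$ depend only on $|f|$, while $\||f|'\|_{L^2}\le\|f'\|_{L^2}$ (from the pointwise bound $|\,|f|'\,|\le|f'|$), it suffices to treat real $f\ge 0$: replacing $f$ by $|f|$ leaves the left side unchanged and cannot increase the right side. By density I may further assume $f$ smooth with compact support, so that all integrals below are finite; note that the integral $\int f^2\log f^2\,dx$ is always well defined, since its positive part is bounded by $\int_{\{f^2>1\}}f^4\,dx<\infty$ (using $\log s\le s$ and $H^1(\mathbb R)\hookrightarrow L^\infty$), and if it equals $-\infty$ the inequality is trivial. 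Finally, by homogeneity it is enough to prove the inequality for $\|f\|_{L^2}=1$; the general form then follows by applying the normalized inequality to $f/\|f\|_{L^2}$, which is precisely what generates the term $(\log\|f\|_{L^2}^2)\|f\|_{L^2}^2$.

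The analytic core is the Gaussian logarithmic Sobolev inequality of Gross: writing $d\gamma=(2\pi)^{-1/2}e^{-x^2/2}dx$, one has $\int g^2\log g^2\,d\gamma-\big(\int g^2\,d\gamma\big)\log\big(\int g^2\,d\gamma\big)\le 2\int (g')^2\,d\gamma$ for regular $g$, with equality for Gaussians. I would either invoke this as classical or, to be self-contained, derive it from hypercontractivity of the Ornstein--Uhlenbeck semigroup (equivalently, by tensorizing the two-point Bernoulli inequality and passing to the central limit). To move from $d\gamma$ to Lebesgue measure I set $g(x)=(2\pi)^{1/4}e^{x^2/4}f(x)$, so that $g^2\,d\gamma=f^2\,dx$ and hence $\int g^2\,d\gamma=\|f\|_{L^2}^2$. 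A direct computation gives $\int g^2\log g^2\,d\gamma=\tfrac12\log(2\pi)\|f\|_{L^2}^2+\tfrac12\int x^2 f^2\,dx+\int f^2\log f^2\,dx$ and, after the integration by parts $\int x f f'\,dx=-\tfrac12\|f\|_{L^2}^2$, also $\int (g')^2\,d\gamma=\|f'\|_{L^2}^2-\tfrac12\|f\|_{L^2}^2+\tfrac14\int x^2 f^2\,dx$. The crucial point is that the anharmonic term $\tfrac12\int x^2 f^2\,dx$ occurs on both sides and cancels; for $\|f\|_{L^2}=1$ the Gross inequality then collapses to the clean baseline estimate $\int f^2\log f^2\,dx\le 2\|f'\|_{L^2}^2-1-\tfrac12\log(2\pi)$.

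It remains to restore the parameter $\alpha$. Applying the baseline estimate to the $L^2$-preserving dilation $f_\lambda(x)=\lambda^{1/2}f(\lambda x)$, and using $\int f_\lambda^2\log f_\lambda^2\,dx=\log\lambda+\int f^2\log f^2\,dx$ together with $\|f_\lambda'\|_{L^2}^2=\lambda^2\|f'\|_{L^2}^2$, yields for $\|f\|_{L^2}=1$ the inequality $\int f^2\log f^2\,dx\le 2\lambda^2\|f'\|_{L^2}^2-(1+\tfrac12\log(2\pi)+\log\lambda)$. Choosing $\lambda=\alpha/\sqrt{2\pi}$ makes $2\lambda^2=\alpha^2/\pi$ and turns the constant into exactly $1+\log\alpha$, giving the normalized statement $\int f^2\log f^2\,dx\le\frac{\alpha^2}{\pi}\|f'\|_{L^2}^2-(1+\log\alpha)$; the general case then follows from the homogeneity reduction. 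I expect the main obstacle to be this core step, namely establishing the sharp Gaussian inequality with the correct constant $2$ (all of the sharpness and the Gaussian-extremizer content of the lemma is concentrated there), together with justifying the integration by parts and the cancellation of the $\int x^2 f^2\,dx$ term for general $f\in H^1(\mathbb R)$ through a careful approximation argument.
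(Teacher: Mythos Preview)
Your argument is correct: the reduction to nonnegative, normalized, compactly supported $f$, the change of measure $g=(2\pi)^{1/4}e^{x^{2}/4}f$ turning Gross's Gaussian inequality into a Lebesgue-measure inequality (with the $\tfrac12\int x^{2}f^{2}\,dx$ terms cancelling exactly), and the final dilation $\lambda=\alpha/\sqrt{2\pi}$ all check out and produce the stated constants.

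Note, however, that the paper does not supply its own proof of this lemma: it simply cites Lieb--Loss \cite[Theorem~8.14]{ELL}. So there is nothing to compare at the level of argument; your proposal is an actual proof where the paper only gives a reference. The route you take---deriving the Euclidean logarithmic Sobolev inequality from the Gaussian one by the ground-state transformation---is one of the standard proofs and is perfectly acceptable. The proof in Lieb--Loss proceeds somewhat differently (via a limit of sharp $L^{p}$ inequalities rather than via Gross directly), but both approaches are classical and yield the same sharp constant with Gaussian extremizers. Your identification of the ``main obstacle'' is accurate: the only nontrivial analytic input is the sharp Gaussian inequality with constant $2$, and once that is granted the rest is calculus.
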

\begin{lemma}\label{L2}
Let $\gamma>0$ and $\omega\in \mathbb{R}$. Then, the quantity $d_{\gamma}(\omega)$ is positive and satisfies
\begin{equation}\label{EA}
d_{\gamma}(\omega)\geq \sqrt{{\pi}/{8}}\, e^{\omega+1}e^{-\frac{\gamma ^{2}}{2}}.
\end{equation}
\end{lemma}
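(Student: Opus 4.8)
The plan is to reduce the whole statement to a single scalar inequality obtained by feeding the logarithmic Sobolev inequality of Lemma \ref{L1} into the Nehari constraint, and then to absorb the defect term $\gamma|u(0)|^2$ by a one-dimensional trace estimate. By the second expression for $d_{\gamma}(\omega)$ in \eqref{MPE}, it suffices to bound $\tfrac12\|u\|_{L^2}^2$ from below by the stated constant for an arbitrary $u\in W(\mathbb R)\setminus\{0\}$ with $I_{\omega,\gamma}(u)=0$.

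First I would write $N=\|u\|_{L^2}^2$ and $D=\|\partial_{x}u\|_{L^2}^2$, so that the constraint $I_{\omega,\gamma}(u)=0$ reads $D+\omega N-\gamma|u(0)|^2=\int_{\mathbb R}|u|^2\mathrm{Log}|u|^2\,dx$. I would then apply Lemma \ref{L1} with the specific choice $\alpha=\sqrt{\pi/2}$, for which $\alpha^2/\pi=\tfrac12$ and $\mathrm{Log}\,\alpha=\tfrac12\mathrm{Log}(\pi/2)$, so that the logarithmic term is dominated by $\tfrac12 D+\big(\mathrm{Log}\,N-1-\tfrac12\mathrm{Log}(\pi/2)\big)N$. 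Substituting this bound into the constraint cancels half of the kinetic energy and, after dividing by $N>0$, yields
\[
\mathrm{Log}\,N\;\geq\;\frac{1}{N}\Big(\tfrac12 D-\gamma|u(0)|^2\Big)+\omega+1+\tfrac12\mathrm{Log}(\pi/2).
\]

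The main obstacle is the term $-\gamma|u(0)|^2$ coming from the attractive delta interaction: unlike the other terms it is not sign-definite in a helpful direction and must be dominated by the kinetic contribution. Here I would invoke the sharp one-dimensional estimate $|u(0)|^2\leq\|u\|_{L^\infty}^2\leq\|u\|_{L^2}\|\partial_{x}u\|_{L^2}=\sqrt{ND}$, valid since $u\in H^1(\mathbb R)$. Setting $s=\sqrt{D/N}\geq0$, this turns the bracket into $N\big(\tfrac12 s^2-\gamma s\big)$, whence $\tfrac1N\big(\tfrac12 D-\gamma|u(0)|^2\big)\geq \tfrac12 s^2-\gamma s$.

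Finally I would minimize the scalar quadratic $\tfrac12 s^2-\gamma s$ over $s\geq0$: its minimum is attained at $s=\gamma$ with value $-\gamma^2/2$, which is precisely where the factor $e^{-\gamma^2/2}$ is produced and where the sign $\gamma>0$ is used. This gives $\mathrm{Log}\,N\geq\omega+1+\tfrac12\mathrm{Log}(\pi/2)-\gamma^2/2$, hence $N\geq\sqrt{\pi/2}\,e^{\omega+1}e^{-\gamma^2/2}$ and therefore $d_{\gamma}(\omega)=\tfrac12 N\geq\sqrt{\pi/8}\,e^{\omega+1}e^{-\gamma^2/2}>0$, which is exactly \eqref{EA}. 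The two points requiring care are the choice $\alpha=\sqrt{\pi/2}$, which keeps the coefficient $1-\alpha^2/\pi=\tfrac12$ strictly positive so that the quadratic in $s$ remains coercive and possesses a finite minimum, and the observation that this clean bound is not claimed to be optimal (optimizing $\alpha$ for each $\gamma$ would improve the constant but is unnecessary for positivity).
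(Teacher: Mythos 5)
Your proof is correct and follows essentially the same route as the paper: both apply the logarithmic Sobolev inequality with $\alpha=\sqrt{\pi/2}$ to the Nehari constraint and control the point term via $|u(0)|^{2}\leq \|u\|_{L^{2}}\|\partial_{x}u\|_{L^{2}}$ together with Young's inequality (your minimization of $\tfrac12 s^{2}-\gamma s$ is exactly the paper's estimate $2\gamma|u(0)|^{2}\leq \gamma^{2}\|u\|_{L^{2}}^{2}+\|\partial_{x}u\|_{L^{2}}^{2}$). The only difference is presentational, so no further comparison is needed.
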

\begin{proof}
Let $u\in W(\mathbb{R}^{})  \setminus  \left\{0 \right\}$ be such that  $I_{\omega,\gamma}(u)=0$. By H\"{o}lder inequality  we have  
\begin{equation}\label{EA2}
2\gamma\left|u(0)\right|^{2}\leq {\gamma^{2}}\left\|u\right\|_{L^{2}}^{2}+\left\|\partial_{x}u\right\|_{L^{2}}^{2}.
\end{equation}
Moreover, using \eqref{EA2},  the logarithmic Sobolev inequality with $\alpha=\sqrt{\frac{\pi}{2}}$ and $I_{\omega,\gamma}(u)=0$, we obtain
\begin{equation*}
 \left(\omega-\frac{\gamma ^{2}}{2} +1+\mbox{Log}\left(\sqrt{\frac{\pi}{2}}\right)\right)\left\|u\right\|^{2}_{L^{2}}\leq \left(\mbox{Log}\left\|u\right\|^{2}_{L^{2}} \right)\left\|u\right\|^{2}_{L^{2}},
\end{equation*}
which implies that 
\begin{equation*}
\left\|u\right\|^{2}_{L^{2}}\geq \sqrt{{\pi}/{2}}\, e^{\omega+1}e^{-\frac{\gamma ^{2}}{2}}.
\end{equation*}
Finally, by the definition of $d_{\gamma}(\omega)$ given in \eqref{MPE}, we get \eqref{EA}.
\end{proof}
\begin{lemma} \label{L3}
Let $\gamma>0$. The following  inequality holds for any $\omega\in \mathbb{R}$:
\begin{equation}\label{EIN}
 d_{\gamma}(\omega)<d_{0}(\omega).
\end{equation} 
\end{lemma}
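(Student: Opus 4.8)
The plan is to build a competitor on the Nehari manifold of the defect problem out of a minimizer for the free ($\gamma=0$) problem, and to use the attractive sign of the delta interaction to obtain a strict drop of the minimal $L^2$-level. Recall from the second expression in \eqref{MPE} that $d_\gamma(\omega)=\frac12\inf\{\|u\|^2_{L^2}: u\in W(\mathbb R)\setminus\{0\},\ I_{\omega,\gamma}(u)=0\}$, so it suffices to exhibit a single admissible $u$ with $I_{\omega,\gamma}(u)=0$ and $\frac12\|u\|_{L^2}^2<d_0(\omega)$.

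First I would take $\psi=\phi_{\omega,0}$, the Gausson, which in the free case is the minimizer realizing $d_0(\omega)=\frac12\|\phi_{\omega,0}\|_{L^2}^2$ (classical logarithmic NLS, cf. \cite{CAS}); in particular $I_{\omega,0}(\psi)=0$ and $\psi(0)=e^{(\omega+1)/2}\neq0$. Comparing the two functionals gives the identity $I_{\omega,\gamma}(u)=I_{\omega,0}(u)-\gamma|u(0)|^2$, whence
\begin{equation*}
I_{\omega,\gamma}(\psi)=-\gamma|\psi(0)|^2<0,
\end{equation*}
since $\gamma>0$. Thus $\psi$ lies strictly inside the region $\{I_{\omega,\gamma}<0\}$ and must be contracted to reach the $\gamma$-Nehari manifold.

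Next I would rescale $\psi$ by a factor $\lambda>0$ and determine $\lambda$ from $I_{\omega,\gamma}(\lambda\psi)=0$. The one point requiring care is the anomalous scaling of the logarithmic term: since $\int|\lambda\psi|^2\mbox{Log}|\lambda\psi|^2\,dx=\lambda^2\mbox{Log}(\lambda^2)\|\psi\|_{L^2}^2+\lambda^2\int|\psi|^2\mbox{Log}|\psi|^2\,dx$, a direct computation yields
\begin{equation*}
I_{\omega,\gamma}(\lambda\psi)=\lambda^2\big(I_{\omega,\gamma}(\psi)-\mbox{Log}(\lambda^2)\,\|\psi\|_{L^2}^2\big).
\end{equation*}
Setting the bracket to zero fixes $\mbox{Log}(\lambda^2)=I_{\omega,\gamma}(\psi)/\|\psi\|_{L^2}^2=-\gamma|\psi(0)|^2/\|\psi\|_{L^2}^2<0$, so that $0<\lambda^2<1$. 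Since $\lambda\psi$ then lies on the $\gamma$-Nehari manifold, the variational definition of $d_\gamma(\omega)$ gives
\begin{equation*}
d_\gamma(\omega)\le\tfrac12\|\lambda\psi\|_{L^2}^2=\tfrac{\lambda^2}{2}\|\psi\|_{L^2}^2<\tfrac12\|\psi\|_{L^2}^2=d_0(\omega),
\end{equation*}
which is exactly \eqref{EIN}. The only genuine subtlety, apart from invoking the explicit free minimizer with $\psi(0)\neq0$, is the logarithmic scaling identity above, which produces the extra $\mbox{Log}(\lambda^2)$ term; it is precisely the attractive sign $\gamma>0$ (forcing $\lambda<1$) that converts the presence of the defect into a strict decrease of the minimal level.
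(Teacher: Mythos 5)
Your proof is correct and follows essentially the same route as the paper's: take the free Gausson $\phi_{\omega,0}$, observe $I_{\omega,\gamma}(\phi_{\omega,0})=-\gamma|\phi_{\omega,0}(0)|^2<0$ for $\gamma>0$, and contract it by a factor $\lambda\in(0,1)$ onto the $\gamma$-Nehari manifold to get the strict inequality. The only (harmless) difference is that you compute $\lambda$ explicitly from the logarithmic scaling identity, whereas the paper merely asserts the existence of such a $\lambda\in(0,1)$.
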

\begin{proof}
We first remark that by \cite[Remark II.3]{CALO} the profile standing-wave $\phi_{\omega, 0}$ is a  minimizer of 
\begin{equation*}
d_{0}(\omega)=\mbox{inf}\left\{S_{\omega,0}(u):u\in W(\mathbb{R}^{}) \setminus  \left\{0 \right\},  I_{\omega,0}(u)=0\right\},
\end{equation*} 
that is, $S_{\omega,0}(\phi_{\omega,0})=d_{0}(\omega)$ and $ I_{\omega,0}(\phi_{\omega,0})=0$. On the other hand, easy computations permit us to obtain 
$$
I_{\omega,\gamma}(\phi_{\omega,0})=I_{\omega,0}(\phi_{\omega,0})-\gamma\left|\phi_{\omega_{}, 0}(0)\right|^{2}=-\gamma e^{\left(\omega+1\right)}<0.
$$
  Thus, there exist  $0<\lambda<1$ such that $I_{\omega,\gamma}(\lambda\phi_{\omega,0})=0$. Therefore, by the definition of $d_{\gamma}(\omega)$, we see that 
\begin{equation*}
d_{\gamma}(\omega)\leq S_{\omega,\gamma}(\lambda\phi_{\omega,0})=\lambda^{2}S_{\omega,0}(\phi_{\omega,0})<S_{\omega,0}(\phi_{\omega,0})=d_{0}(\omega),
\end{equation*}
and the proof of the Lemma is finished.
\end{proof}

\begin{remark} When $\gamma<0$ we have $d_{\gamma}(\omega)=d_{0}(\omega)$ for any $\omega\in\mathbb{R}$. Indeed, let $u\in W(\mathbb{R}) \setminus  \left\{0 \right\}$ be such that  $I_{\omega,0}(u)=0$. By direct computations, we see that  $I_{\omega,\gamma}(u)=-\gamma\left|u(0)\right|^{2}>0$. Then, there is  $s>1$ such that $I_{\omega,\gamma}(su)=0$. Then, since $S_{\omega,\gamma}(su)=s^2 S_{\omega, 0}(u)\geqq S_{\omega, 0}(u)\geqq d_{0}(\omega)$, we obtain from the definition of $d_{\gamma}(\omega)$ given in \eqref{MPE}, that $d_{0}(\omega)\leq d_{\gamma}(\omega)$. On the other hand, we define $w_{n}(x)=\phi_{\omega,0}(x-n)$ for  $n\in\mathbb{N}$. It is clear that $d_{0}(\omega)=S_{\omega,0}(w_{n})$ and $I_{\omega,\gamma}(w_{n})=-\gamma\left|\phi_{\omega,0}(n)\right|^{2}>0$. Thus, there exist $\lambda_{n}>1$ such that $I_{\omega,\gamma}(\lambda_{n} w_{n})=0$ for any $n\in\mathbb{N}$ and  $\lim_{n\rightarrow \infty}\lambda_{n}=1$. Then, by the definition of $d_{\gamma}(\omega)$ and from $I_{\omega,0}(w_n)=0$ we obtain
\begin{equation*}
d_{\gamma}(\omega)\leq S_{\omega,\gamma}(\lambda_{n}w_{n})=\lambda_{n}^{2}S_{\omega,0}(w_{n})=\lambda_{n}^{2}d_{0}(\omega),
\end{equation*}
which implies that  $d_{\gamma}(\omega)\leq d_{0}(\omega)$ and thus $d_{\gamma}(\omega)=d_{0}(\omega)$.
\end{remark}

The following lemma is a variant of the Br\'ezis-Lieb lemma from \cite{LBL}.

\begin{lemma} \label{L4}
Let  $\left\{u_{n}\right\}$ be a bounded sequence in $W(\mathbb{R})$ such that $u_{n}\rightarrow u$ a.e. in $\mathbb{R}$. Then $u\in W(\mathbb{R})$ and 
\begin{equation*}
\lim_{n\rightarrow \infty}\int_{\mathbb{R}}\left\{\left|u_{n}\right|^{2}\mathrm{Log}\left|u_{n}\right|^{2}-\left|u_{n}-u\right|^{2}\mathrm{Log}\left|u_{n}-u\right|^{2}\right\}dx=\int_{\mathbb{R}}\left|u\right|^{2}\mathrm{Log}\left|u\right|^{2}dx.
\end{equation*}
\end{lemma}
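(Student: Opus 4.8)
The plan is to run the Br\'ezis--Lieb truncation scheme, adapted to the non-homogeneous, sign-changing integrand $H(z):=|z|^2\,\mathrm{Log}\,|z|^2$. First I would record the membership $u\in W(\mathbb R)$ and set $w_n:=u_n-u$, so that $w_n\to 0$ a.e. Since $W(\mathbb R)$ is reflexive and $\{u_n\}$ is bounded, a subsequence converges weakly in $W(\mathbb R)$; its weak limit agrees a.e. with the pointwise limit $u$, because $W\hookrightarrow H^1(\mathbb R)\hookrightarrow L^2_{\mathrm{loc}}(\mathbb R)$ with the last embedding compact, which forces a.e. convergence along a further subsequence. Hence $u\in W(\mathbb R)$ and $\{w_n\}$ is bounded in $W(\mathbb R)$. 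The assertion to prove is then equivalent to
\[
\int_{\mathbb R}\bigl|H(u_n)-H(w_n)-H(u)\bigr|\,dx\longrightarrow 0 .
\]

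The core of the argument, and the step I expect to be the main obstacle, is an elementary inequality: for every $\epsilon>0$ there is $C_\epsilon>0$ such that
\[
\bigl|H(a+b)-H(a)-H(b)\bigr|\le \epsilon\,\Phi(a)+C_\epsilon\,\Psi(b),\qquad a,b\in\mathbb{C},
\]
where, for a fixed small $\eta\in(0,1)$,
\[
\Phi(z)=\Psi(z)=|z|^2+|z|^{2+\eta}+|z|^2\,\bigl|\mathrm{Log}\,|z|^2\bigr| .
\]
This is delicate precisely because $H$ is neither homogeneous nor globally Lipschitz and changes sign: its gradient $|\nabla H(z)|=2|z|\,\bigl|1+\mathrm{Log}\,|z|^2\bigr|$ is logarithmically singular near the origin and superquadratic at infinity. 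I would split into amplitude regimes. For $|a|,|b|\ge 1$, use $|\nabla H(z)|\le C_\eta|z|^{1+\eta}$ with the mean value theorem and Young's inequality (conjugate exponents $\tfrac{2+\eta}{1+\eta}$ and $2+\eta$) to produce the $|z|^{2+\eta}$ contributions. For small amplitudes, expand $H(a+b)$ and bound the cross terms by $\epsilon\,|a|^2|\mathrm{Log}\,|a|^2|+C_\epsilon\,|b|^2|\mathrm{Log}\,|b|^2|$, exploiting that $s\mapsto s^2|\mathrm{Log}\,s^2|$ is monotone near $0$ so that the residual bare term $|H(\cdot)|$ of the smaller variable is absorbed into $\Psi$ of the larger one (using $\mathbf 1_{\{|z|>e^{-1}\}}\le e\,|z|^2$ to tidy constants). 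The mixed regime (one small, one large amplitude) is covered by the same two bounds.

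Granting this inequality, the two required facts about the weights are: $\int_{\mathbb R}\Phi(w_n)\,dx\le C$ uniformly in $n$ -- the $L^2$ and $L\,\mathrm{Log}\,L$ pieces are dominated by the $W$-norm of $w_n$, while $\int_{\mathbb R}|w_n|^{2+\eta}\,dx\le \|w_n\|_{L^\infty}^{\eta}\|w_n\|_{L^2}^2\le C$ by the one-dimensional embedding $H^1(\mathbb R)\hookrightarrow L^\infty(\mathbb R)$ -- and $\int_{\mathbb R}\Psi(u)\,dx<\infty$ since $u\in W(\mathbb R)\subset H^1\cap L^2$. I would then take $a=w_n$, $b=u$ and set $G_n^\epsilon:=\bigl(|H(u_n)-H(w_n)-H(u)|-\epsilon\,\Phi(w_n)\bigr)_+$. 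The inequality gives $0\le G_n^\epsilon\le C_\epsilon\,\Psi(u)\in L^1(\mathbb R)$, a fixed majorant, while $w_n\to 0$ a.e. and continuity of $H$ with $H(0)=0$ yield $G_n^\epsilon\to 0$ a.e.; hence $\int_{\mathbb R}G_n^\epsilon\,dx\to 0$ by dominated convergence. Finally
\[
\int_{\mathbb R}\bigl|H(u_n)-H(w_n)-H(u)\bigr|\,dx\le \int_{\mathbb R}G_n^\epsilon\,dx+\epsilon\int_{\mathbb R}\Phi(w_n)\,dx\le \int_{\mathbb R}G_n^\epsilon\,dx+\epsilon\,C,
\]
so $\limsup_n\int_{\mathbb R}|\cdots|\,dx\le \epsilon\,C$, and letting $\epsilon\downarrow 0$ completes the proof.
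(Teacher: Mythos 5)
Your argument is correct in outline, but it takes a genuinely different route from the paper. The paper never proves a Br\'ezis--Lieb-type inequality for the sign-changing integrand $H(z)=|z|^2\mathrm{Log}|z|^2$ directly: it uses the decomposition from the Appendix, $|z|^{2}\mathrm{Log}|z|^{2}=B(|z|)-A(|z|)$ with $A,B$ convex, increasing, nonnegative and vanishing at $0$, and then applies the Br\'ezis--Lieb theorem (\cite[Theorem 2 and Examples (b)]{LBL}) to each piece separately --- to $A$ using boundedness of $\{u_n\}$ in the Orlicz space $L^{A}(\mathbb R)$, and to $B$ using the inequality \eqref{DB} together with boundedness in $H^{1}(\mathbb R)$. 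Your proposal instead verifies the general Br\'ezis--Lieb hypothesis $|H(a+b)-H(a)-H(b)|\le\epsilon\,\Phi(a)+C_\epsilon\Psi(b)$ for $H$ itself and reruns the truncation/dominated-convergence machinery. What your route buys is self-containedness and independence from the particular $A/B$ splitting; what the paper's route buys is brevity, since convexity plus monotonicity is exactly the case covered by Examples (b) of \cite{LBL}, and it sidesteps the delicate elementary inequality, whose small-amplitude regime really does require the log-matching and the monotonicity of $s\mapsto s^{2}|\mathrm{Log}\,s^{2}|$ near $0$ that you flag. Two points in your sketch deserve to be made explicit if written out in full: (i) the uniform bound $\int_{\mathbb R}|w_n|^{2}|\mathrm{Log}|w_n|^{2}|\,dx\le C$ is itself most easily obtained through the very decomposition $|H|\le A+B$ (via \eqref{DA1} for the $A$-part and \eqref{DB} with $v=0$ for the $B$-part), so the $A/B$ structure reappears even in your approach; and (ii) in the mean-value-theorem step the crude global bound $|\nabla H(z)|\le C(1+|z|^{1+\eta})$ produces a stray first-power term $C|b|$ that is not dominated by $\Psi(b)$ for small $|b|$, so the regime splitting you describe is genuinely necessary rather than cosmetic.
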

\begin{proof}
We first recall that, by \eqref{IFD}-Appendix,  $\left|z\right|^{2}\mbox{Log}\left|z\right|^{2}=A(\left|z\right|)-B(\left|z\right|)$  for every  $z\in\mathbb{C}$.  By the weak-lower semicontinuity of the ${L^{2}(\mathbb{R})}$-norm and Fatou lemma we have $u\in W(\mathbb{R})$.  It is clear that the sequence $\left\{u_{n}\right\}$ is  bounded in ${L^{A}(\mathbb{R})}$. Since $A$ in (\ref{IFD}) is convex and increasing function with $A(0)=0$, it is follows from Br\'ezis-Lieb lemma \cite[Theorem 2 and Examples (b)]{LBL} that 
\begin{equation}\label{AC}
\lim_{n\rightarrow \infty}\int_{\mathbb{R}} \left| A(\left|u_{n}\right|)-A(\left|u_{n}-u\right|)- A(\left|u_{}\right|)\right|dx=0.
\end{equation}
On the other hand, by the continuous embedding $W(\mathbb{R})\hookrightarrow H^{1}(\mathbb{R})$, we have  that  $\left\{u_{n}\right\}$ is also bounded in ${H^{1}(\mathbb{R})}$. An easy calculation shows that the function $B$ defined  in (\ref{IFD}) is convex, increasing and  nonnegative with    
$B(0)=0$. Furthermore, by H\"{o}lder and Sobolev inequalities, for any $u$, $v\in H^{1}(\mathbb{R})$  we have the following key inequality,
\begin{equation}\label{DB}
\int_{\mathbb{R}}\left|B(\left|u(x)\right|)- B(\left|v(x)\right|)\right|dx\leq C\left(1+ \left\|u\right\|^{2}_{H^{1}(\mathbb{R})}+ \left\|v\right\|^{2}_{H^{1}(\mathbb{R})} \right)\left\|u-v\right\|_{{L^{2}}}.
\end{equation}
Then, the function $B$ satisfies the hypotheses of \cite[Theorem 2 and Examples (b)]{LBL} and therefore
\begin{equation}\label{BC}
\lim_{n\rightarrow \infty}\int_{\mathbb{R}} \left| B(\left|u_{n}\right|)-B(\left|u_{n}-u\right|)- B(\left|u_{}\right|)\right|dx=0.
\end{equation}
Thus the result follows from \eqref{AC} and  \eqref{BC}. 
\end{proof}

\begin{proof}[ {\bf{Proof of Theorem \ref{ESSW}}}] We use the argument in \cite[Proposition 3]{FO}(see also \cite{ADNP}).  Let $\left\{ u_{n}\right\} \subseteq W(\mathbb{R}^{})$ be a minimizing sequence for $d_{\gamma}(\omega)$, then the sequence $\left\{ u_{n}\right\}$ is bounded in  $W(\mathbb{R}^{})$. Indeed, it is clear that the sequence $\|u_{n}\|^{2}_{L^{2}}$ is bounded. Moreover, using \eqref{EA2}, the logarithmic Sobolev inequality and recalling that $I_{\omega,\gamma}(u_{n})=0$, we obtain
\begin{equation*}
\left(\frac{1}{2}-\frac{\alpha^{2}}{\pi}\right)\left\|u'_{n}\right\|^{2}_{L^{2}}\leq \mbox{Log}\Biggl(\frac{e^{\frac{\gamma^{2}}{2}}e^{-\left(\omega+1\right)}}{\alpha^{}}\Biggr)\left\|u_{n}\right\|^{2}_{L^{2}} +\left(\mbox{Log}\left\|u_{n}\right\|^{2}_{L^{2}} \right)\left\|u_{n}\right\|^{2}_{L^{2}}.
\end{equation*}
Taking $\alpha>0$ sufficiently small, we see that $\|u'_{n}\|^{2}_{L^{2}}$ is bounded, so the sequence $\left\{ u_{n}\right\}$ is bounded in $H^{1}(\mathbb{R}^{})$.  Then, using  $I_{\omega,\gamma}(u_{n})=0$ again, and \eqref{DB} we obtain
\begin{equation*}
\left\| u'_{n}\right\|^{2}_{L^{2}}+\int_{\mathbb{R}^{}}A\left(\left|u_{n}(x)\right|\right)dx\leq C,
\end{equation*}
which implies, by \eqref{DA1} in the Appendix, that the sequence $\left\{ u_{n}\right\}$ is bounded in $W^{}(\mathbb{R}^{})$. Furthermore, since $W^{}(\mathbb{R}^{})$ is a reflexive Banach space, there is $\varphi \in W(\mathbb{R}^{})$ such that, up to a subsequence, $u_{n}\rightharpoonup \varphi$ weakly in $W^{}(\mathbb{R}^{})$ and $u_{n}(x)\rightarrow \varphi(x)$ $a.e.$  $x\in\mathbb{R}$.

Next, we show that $\varphi$ is nontrivial. Suppose, by contradiction, that $\varphi\equiv 0$. Since the embedding $H^{1}\left(-1,1\right)\hookrightarrow C\left[-1,1\right]$ is compact, we see that $u_{n}(0)\rightarrow \varphi(0)=0$. Thus, since  $I_{\omega,\gamma}(u_{n})=0$ we obtain
\begin{equation}\label{AA}
\lim_{n \to \infty} I_{\omega,0}(u_{n})=\gamma \lim_{n \to \infty} \left|u_{n}(0)\right|^{2}= 0.
\end{equation}
Define the sequence $v_{n}(x)=\lambda_{n}u_{n}(x)$ with 
\begin{equation*}
\lambda_{n}=\exp\left(\frac{I_{\omega,0}(u_{n})}{2\|u_{n}\|^{2}_{L^{2}}}\right),
\end{equation*}
where $\exp(x)$ represents the exponential function. Then, it follows from  \eqref{AA} that $\lim_{n\rightarrow \infty}\lambda_{n}=1$. Moreover,  an easy calculation shows that $I_{\omega,0}(v_{n})=0$ for any $n\in \mathbb{N}$.  Thus, by  the definition of $d_{\gamma}(\omega)$, it follows
\begin{equation*}
d_{0}(\omega)\leq \frac{1}{2} \lim_{n \to \infty}\|v_{n}\|^{2}_{L^{2}}= \frac{1}{2} \lim_{n \to \infty}\left\{\lambda^{2}_{n} \|u_{n}\|^{2}_{L^{2}}\right\}= d_{\gamma}(\omega),
\end{equation*}
that it is contrary to \eqref{EIN} and therefore we conclude that $\varphi$ is nontrivial.

Now we prove that $I_{\omega,\gamma}(\varphi)=0$. First,  assume by contradiction that $I_{\omega,\gamma}(\varphi)<0$. By elementary computations, we can see that there is $0<\lambda<1$ such that $I_{\omega,\gamma}(\lambda \varphi)=0$. Then, from the definition of $d_{\gamma}(\omega)$ and  the weak lower semicontinuity of the $L^{2}(\mathbb{R}^{})$-norm, we have
\begin{equation*}
d_{\gamma}(\omega)\leq \frac{1}{2}\left\|\lambda \varphi\right\|^{2}_{L^{2}}<\frac{1}{2}\left\|\varphi\right\|^{2}_{L^{2}}\leq \frac{1}{2}\liminf\limits_{n\rightarrow \infty}\left\|u_{n}\right\|^{2}_{L^{2}}=d_{\gamma}(\omega),
\end{equation*}
it which is impossible. On the other hand, assume that $I_{\omega,\gamma}(\varphi)>0$. Since the embedding  $W(\mathbb{R}^{})\hookrightarrow {H^{1}(\mathbb{R}^{})}$ is continuous, we see that $u_{n}\rightharpoonup \varphi$ weakly in $H^{1}(\mathbb{R})$. Thus, we have  
\begin{align}
& \left\|u_{n}\right\|^{2}_{L^{2}}-\left\|u_{n}-\varphi\right\|^{2}_{L^{2}}-\left\|\varphi\right\|^{2}_{L^{2}}\rightarrow0 \label{2C11}\\
&\left\|u'_{n}\right\|^{2}_{L^{2}}-\left\|u'_{n}-\varphi'\right\|^{2}_{L^{2}}-\left\|\varphi' \right\|^{2}_{L^{2}}\rightarrow0, \label{2C12} 
\end{align}
as $n\rightarrow\infty$. Combining \eqref{2C11}, \eqref{2C12} and Lemma \ref{L4} leads to 
\begin{equation*}
\lim_{n\rightarrow \infty}I_{\omega, \gamma}(u_{n}-\varphi)=\lim_{n\rightarrow \infty}I_{\omega, \gamma}(u_{n})-I_{\omega,\gamma}(\varphi)=-I_{\omega,\gamma}(\varphi),
\end{equation*}
which combined with  $I_{\omega, \gamma}(\varphi)> 0$ give us  that $I_{\omega, \gamma}(u_{n}-\varphi)<0$ for sufficiently large $n$. Thus, by \eqref{2C11} and  applying the same argument as above, we see that 
\begin{equation*}
d_{\gamma}(\omega)\leq\frac{1}{2} \lim_{n\rightarrow \infty}\left\|u_{n}-\varphi\right\|^{2}_{L^{2}}=d_{\gamma}(\omega)-\frac{1}{2}\left\|\varphi\right\|^{2}_{L^{2}},
\end{equation*}
which is a contradiction because $\|\varphi\|^{2}_{L^{2}}>0$. Then, we deduce that $I_{\omega,\gamma}(\varphi)=0$. Finally, by the weak lower semicontinuity of the $L^{2}(\mathbb{R}^{})$-norm,  we have
\begin{equation}\label{inequa}
d_{\gamma}(\omega)\leq \frac{1}{2}\left\|\varphi\right\|^{2}_{L^{2}}\leq \frac{1}{2} \liminf\limits_{n\rightarrow \infty}\left\|u_{n}\right\|^{2}_{L^{2}}=d_{\gamma}(\omega),
\end{equation}
which implies, by the definition of $d_{\gamma}(\omega)$, that $\varphi\in \mathcal{N}_{\omega,\gamma}$. Moreover, by Remark \ref{RM} and  Proposition \ref{UDGS} there exist $\theta\in\mathbb{R}$ such that $\varphi(x)=e^{i\theta}\phi_{\omega,\gamma}(x)$.  This concludes the proof of Theorem \ref{ESSW}.
\end{proof}

\section{Stability of the ground states}
\label{S:4}

This section is devoted to the proof of Theorem \ref{2ESSW}. We first prove compactness of the minimizing sequences.
\begin{lemma} \label{CSM}
Let $\left\{ u_{n}\right\}\subseteq W(\mathbb{R}^{})$ be a minimizing sequence for $d_{\gamma}(\omega)$. Then, up to a subsequence,  there is  $\theta_{}\in \mathbb{R}$ such that $u_{n}\rightarrow e^{i\theta_{}}\phi_{\omega, \gamma}$ in $W(\mathbb{R}^{})$.
\end{lemma}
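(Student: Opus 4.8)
The plan is to build directly on the analysis already carried out in the proof of Theorem \ref{ESSW}. Given a minimizing sequence $\{u_n\}$ for $d_\gamma(\omega)$, that argument shows $\{u_n\}$ is bounded in $W(\mathbb R)$ and that, along a subsequence, $u_n\rightharpoonup\varphi$ weakly in $W(\mathbb R)$ with $u_n\to\varphi$ a.e., where $\varphi\in\mathcal N_{\omega,\gamma}$; by Remark \ref{RM} and Proposition \ref{UDGS} we then have $\varphi=e^{i\theta}\phi_{\omega,\gamma}$ for some $\theta\in\mathbb R$. So the limit is already identified, and the whole task is to upgrade weak convergence to strong convergence in $W(\mathbb R)$. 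I would first record the key scalar fact: since each $u_n$ lies on the Nehari manifold, $S_{\omega,\gamma}(u_n)=\tfrac12\|u_n\|_{L^2}^2$, so the minimizing property gives $\|u_n\|_{L^2}^2\to 2d_\gamma(\omega)=\|\varphi\|_{L^2}^2$.

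Next I would obtain strong convergence in $L^2$ and then in $H^1$. Since $u_n\rightharpoonup\varphi$ weakly in $L^2$ and $\|u_n\|_{L^2}\to\|\varphi\|_{L^2}$, the Radon--Riesz property of the Hilbert space $L^2(\mathbb R)$ yields $u_n\to\varphi$ strongly in $L^2$. To pass to $H^1$, I use the identity $I_{\omega,\gamma}(u_n)=0$ to write
\[
\|\partial_x u_n\|_{L^2}^2=-\omega\|u_n\|_{L^2}^2+\gamma|u_n(0)|^2+\int_{\mathbb R}|u_n|^2\mathrm{Log}|u_n|^2\,dx .
\]
The first two terms converge to the corresponding quantities for $\varphi$ (using $\|u_n\|_{L^2}\to\|\varphi\|_{L^2}$ and $u_n(0)\to\varphi(0)$ via the compact embedding $H^1(-1,1)\hookrightarrow C[-1,1]$), and once the logarithmic integral is shown to converge to $\int_{\mathbb R}|\varphi|^2\mathrm{Log}|\varphi|^2\,dx$, the right-hand side tends to $\|\partial_x\varphi\|_{L^2}^2$ because $I_{\omega,\gamma}(\varphi)=0$. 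Combining $\|\partial_x u_n\|_{L^2}\to\|\partial_x\varphi\|_{L^2}$ with the weak convergence $\partial_x u_n\rightharpoonup\partial_x\varphi$ in $L^2$ gives $u_n\to\varphi$ strongly in $H^1(\mathbb R)$.

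The heart of the matter---and the step I expect to be the main obstacle---is the convergence of the nonlinear term, together with its translation into the Orlicz component of the $W$-norm. By Lemma \ref{L4} applied to $\{u_n\}$, it suffices to prove $\int_{\mathbb R}|u_n-\varphi|^2\mathrm{Log}|u_n-\varphi|^2\,dx\to 0$. Writing $|z|^2\mathrm{Log}|z|^2=A(|z|)-B(|z|)$ and setting $w_n=u_n-\varphi$ (so $w_n\to0$ in $L^2$ and $\{w_n\}$ is bounded in $H^1$), I would treat the two pieces separately: for $B$, inequality \eqref{DB} with $v=0$ gives $\int_{\mathbb R}B(|w_n|)\,dx\le C(1+\|w_n\|_{H^1}^2)\|w_n\|_{L^2}\to0$; for the superlinear part $A$, I split $\mathbb R$ into $\{|w_n|\le1\}$, where $A(|w_n|)\le C|w_n|^2$, and $\{|w_n|>1\}$, where the uniform bound $\|w_n\|_{L^\infty}\le M$ coming from $H^1(\mathbb R)\hookrightarrow L^\infty(\mathbb R)$ gives $A(|w_n|)\le C|w_n|^2\mathrm{Log}|w_n|^2\le C_M|w_n|^2$; in both regions the integral is dominated by $\|w_n\|_{L^2}^2\to0$. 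Hence $\int_{\mathbb R}A(|w_n|)\,dx\to0$ and $\int_{\mathbb R}B(|w_n|)\,dx\to0$, which furnishes the log-integral convergence needed in the previous step.

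Finally I would assemble the conclusion. The convergence $\int_{\mathbb R}A(|w_n|)\,dx\to0$ of the Orlicz modular, together with the $\Delta_2$-property of $A$ (for which $A(2s)/A(s)$ stays bounded), upgrades to convergence of the Luxemburg norm $\|u_n-\varphi\|_{L^A}\to0$; combined with the $H^1$ convergence from the second step, this is exactly $\|u_n-\varphi\|_{W(\mathbb R)}\to0$, proving the lemma. The delicate points are thus entirely concentrated in the nonlinear term: making the logarithmic integral converge so that the gradient norms converge, and passing from modular to norm convergence in the Orlicz space $L^A$.
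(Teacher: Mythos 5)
Your overall architecture (identify the weak limit via Theorem \ref{ESSW}, get $L^{2}$ convergence from $\|u_{n}\|_{L^{2}}^{2}\to 2d_{\gamma}(\omega)=\|\varphi\|_{L^{2}}^{2}$, then recover the gradient and the Orlicz component from the constraint $I_{\omega,\gamma}(u_{n})=0$) matches the paper's, and your treatment of the $B$-part via \eqref{DB} is exactly what the paper does. However, the step you yourself flag as the heart of the matter contains a genuine error. You claim that on $\{|w_{n}|\le 1\}$ one has $A(|w_{n}|)\le C|w_{n}|^{2}$, where $w_{n}=u_{n}-\varphi$. This is false: by \eqref{IFD}, $A(s)=-s^{2}\,\mathrm{Log}(s^{2})$ for $0\le s\le e^{-3}$, so $A(s)/s^{2}=-\mathrm{Log}(s^{2})\to+\infty$ as $s\to 0^{+}$, and no constant $C$ works near $0$. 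Consequently $\|w_{n}\|_{L^{2}}\to 0$ together with boundedness in $H^{1}$ (or even in $W(\mathbb R)$) does not imply $\int_{\mathbb R}A(|w_{n}|)\,dx\to 0$: one can have $|w_{n}|=\epsilon_{n}$ on a set of measure $M_{n}$ with $\epsilon_{n}^{2}M_{n}\to 0$ while $\epsilon_{n}^{2}|\mathrm{Log}\,\epsilon_{n}^{2}|\,M_{n}\not\to 0$. This failure is precisely the reason the Orlicz space $L^{A}(\mathbb R)$ is introduced at all; if $A$ were dominated by $Cs^{2}$ the whole functional framework on $W(\mathbb R)$ would be unnecessary. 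Since both your $H^{1}$-convergence step and your Orlicz-norm step feed on this false bound, the proof as written does not close.

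The paper circumvents exactly this difficulty by never trying to estimate $\int A(|u_{n}-\varphi|)$ from $L^{2}$ smallness. Instead, it uses $I_{\omega,\gamma}(u_{n})=I_{\omega,\gamma}(\varphi)=0$ to write $\|u_{n}'\|_{L^{2}}^{2}+\int A(|u_{n}|)\,dx=-\omega\|u_{n}\|_{L^{2}}^{2}+\gamma|u_{n}(0)|^{2}+\int B(|u_{n}|)\,dx$, whose right-hand side converges to the corresponding quantity for $\varphi$ (by the strong $L^{2}$ convergence, the compact trace at $0$, and \eqref{DB}); this gives convergence of the \emph{sum} as in \eqref{2BX1}. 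Then weak lower semicontinuity of the gradient norm and Fatou's lemma applied to $A(|u_{n}|)\ge 0$ force each summand to converge separately, yielding \eqref{N1} and \eqref{N2}; finally Proposition \ref{orlicz}\,(ii) converts the modular convergence \eqref{N2} plus a.e.\ convergence into $u_{n}\to\varphi$ in $L^{A}(\mathbb R)$. If you replace your domination argument for the $A$-part by this splitting-by-semicontinuity argument, the rest of your proof goes through (your final step, that $\int A(|w_{n}|)\to 0$ would imply $\|w_{n}\|_{L^{A}}\to 0$, is correct by \eqref{DA1}, but you never legitimately reach its hypothesis).
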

\begin{proof}
By Theorem \ref{ESSW}, we see that there is  $\varphi\in \mathcal{N}_{\omega,\gamma}$ such that, up to a subsequence, $u_{n}\rightharpoonup \varphi$ weakly  in $W(\mathbb{R}^{})$ and $u_{n}(x)\rightarrow \varphi(x)$ $a.e.$ $x\in \mathbb{R}$. Furthermore, by  (\ref{MPE}) and (\ref{inequa}) we have  $u_{n}\rightarrow \varphi$   in $L^{2}(\mathbb{R}^{})$. Then, since the sequence $\left\{ u_{n}\right\}$ is bounded in $H^{1}(\mathbb{R}^{})$, from \eqref{DB} we obtain
\begin{equation*}
 \lim_{n\rightarrow \infty}\int_{\mathbb{R}^{}}B\left(\left|u_{n}(x)\right|\right)dx=\int_{\mathbb{R}^{}}B\left(\left|\varphi(x)\right|\right)dx.
\end{equation*}
Thus,  since $I_{\omega,\gamma}(u_{n})=I_{\omega,\gamma}(\varphi)=0$  for any $n\in \mathbb{N}$,  we obtain
\begin{equation}\label{2BX1}
\lim_{n\rightarrow \infty}\left\{\left\|u'_{n}\right\|^{2}_{L^{2}}+\int_{\mathbb{R}^{}}A\left(\left|u_{n}(x)\right|\right)dx\right\}=\left\|\varphi'\right\|^{2}_{L^{2}}+\int_{\mathbb{R}^{}}A\left(\left|\varphi(x)\right|\right)dx.
\end{equation}
Moreover, by \eqref{2BX1},  the weak lower semicontinuity of the $L^{2}(\mathbb{R}^{})$-norm and Fatou lemma, we deduce (see, for example, \cite[Lemma 12 in chapter V]{AH1})
\begin{align}
& \lim_{n\rightarrow \infty}\left\|u'_{n}\right\|^{2}_{L^{2}}=\left\|\varphi'\right\|^{2}_{L^{2}},\label{N1}\\
& \lim_{n \to \infty}\int_{\mathbb{R}}A\left(\left|u_{n}(x)\right|\right)dx=\int_{\mathbb{R}}A\left(\left|\varphi(x)\right|\right)dx. \label{N2} \end{align}
Since $u_{n}\rightharpoonup \varphi$ weakly  in $H^{1}(\mathbb{R})$, it follows from \eqref{N1} that $u_{n}\rightarrow\varphi$  in $H^{1}(\mathbb{R})$.  Finally, by Proposition \ref{orlicz}-{\it ii)}  (Appendix below) and \eqref{N2} we have $u_{n}\rightarrow\varphi$  in $L^{A}(\mathbb{R})$. Thus, by definition of the $W(\mathbb{R})$-norm, we infer that $u_{n}\rightarrow\varphi$  in $W(\mathbb{R})$. Thus, by Remark \ref{RM} and  Proposition \ref{UDGS} there exist $\theta\in\mathbb{R}$ such that $\varphi(x)=e^{i\theta}\phi_{\omega,\gamma}(x)$. This finishes  the proof.
\end{proof}

\begin{proof}[ {\bf{Proof of Theorem \ref{2ESSW}}}] 
We argue by contradiction.  Suppose that $\phi_{\omega, \gamma}$ is not stable in $W(\mathbb{R})$. Then, there is $\epsilon>0$ and two  sequences $\left\{u_{n,0}\right\}\subset W(\mathbb{R})$, $\left\{t_{n}\right\}\subset \left(0,\,\infty\right)$ such that 
\begin{align}
&\left\|u_{n,0}-\phi_{\omega,\gamma}\right\|_{ W(\mathbb{R})}\rightarrow 0,\;\;\; \,\,\,\ \textrm{ as  }\,\, n\rightarrow \infty, \label{T21} \\
&{\rm\inf\limits_{\theta\in \mathbb{R}}} \|u_{n}(t_{n})-e^{i\theta}\phi_{\omega_{}, \gamma} \|_{W(\mathbb{R}^{})}\geq{\epsilon}, \;\;\;\,\,\ \textrm{for any $n\in \mathbb{R}$,}\label{T22}
\end{align}
where $u_{n}$ is the solution of \eqref{00NL} with initial data $u_{n,0}$ (see Proposition \ref{PCS}). Set $v_{n}(x)= u_{n}(t_{n},x)$. By \eqref{T21} and conservation laws, we obtain
\begin{gather}
\left\|v_{n}\right\|^{2}_{L^{2}}=\left\|u_{n}(t_{n})\right\|^{2}_{L^{2}}=\left\|u_{n,0}\right\|^{2}_{L^{2}}\rightarrow \left\|\phi_{\omega,\gamma}\right\|^{2}_{L^{2}}\label{CE1} \\
E(v_{n})=E(u_{n}(t_{n}))=E(u_{n,0})\rightarrow E(\phi_{\omega,\gamma}),
\label{CE2}
\end{gather}
as $n\rightarrow \infty$.  In particular, it follows from \eqref{CE1} and \eqref{CE2} that, as $n\rightarrow \infty$, 
\begin{equation}\label{A12}
S_{\omega,\gamma}(v_{n})\rightarrow S_{\omega,\gamma}(\phi_{\omega,\gamma})=d_{\gamma}(\omega).
\end{equation}
Next, by combining \eqref{CE1} and \eqref{A12} lead us to $I_{\omega,\gamma}(v_{n})\rightarrow 0$ as $n\rightarrow \infty$.
Define the sequence $f_{n}(x)=\rho_{n}v_{n}(x)$ with
\begin{equation*}
\rho_{n}=\exp\left(\frac{I_{\omega,\gamma}(v_{n})}{2\|v_{n}\|^{2}_{L^{2}}}\right).
\end{equation*}
 It is clear that $\lim_{n\rightarrow \infty}\rho_{n}=1$ and $I_{\omega,\gamma}(f_{n})=0$ for any $n\in\mathbb{R}$. Furthermore, since the sequence $\left\{v_{n}\right\}$  is bounded in $W(\mathbb{R})$, we get $\|v_{n}-f_{n}\|_{W(\mathbb{R})}\rightarrow 0$ as $n\rightarrow \infty$. Then, by  \eqref{A12}, we have that $\left\{f_{n}\right\}$ is a minimizing sequence for $d_{\gamma}(\omega)$. Thus, by Lemma \ref{CSM}, up to a subsequence, there is $\theta_{0}\in \mathbb{R}$  such that $f_{n}\rightarrow e^{i\theta_{0}}\phi_{\omega, \gamma}$ in  $W(\mathbb{R})$. Therefore, by using the triangular inequality, we have
\begin{equation*}
\|u_{n}(t_{n})-e^{i\theta_{0}}\phi_{\omega,\gamma}\|_{ W(\mathbb{R})}\leq \|v_{n}-f_{n}\|_{ W(\mathbb{R})}+\|f_{n}-e^{i\theta_{0}}\phi_{\omega, \gamma}\|_{ W(\mathbb{R})}\rightarrow 0,
\end{equation*}
as $n\rightarrow \infty$, it which is a contradiction with \eqref{T22}. This finishes the proof.
\end{proof}

\section*{Acknowledgements}

The research of J. Angulo Pava was supported by CNPq/Brazil,  Processo 312435/2015-0.   A. Hernandez Ardila was supported by CAPES and CNPq/Brazil. The results in this paper form a part of the second author's Ph.D. thesis.

\section{Appendix}
\label{S:5}

The functional of energy in  (\ref{energy}), in general,  fails to be finite and of class $C^{1}$ on $H^{1}(\mathbb{R^{}}^{})$. Due to this loss of smoothness, in order to study existence of solutions to (\ref{00NL}) and (\ref{eq2}), it is convenient  to work in a suitable Banach space endowed with a Luxemburg type norm in order to make functional $E$  well defined and $C^{1}$ smooth.  So, define 
\begin{equation*}
F(z)=\left|z\right|^{2}\mbox{Log}\left|z\right|^{2}\,\,\,\,\,\,\,\,  \text{for every  $z\in\mathbb{C}$},
\end{equation*}
and as in \cite{CL},  we define the functions  $A$, $B$ on $\left[0, \infty\right)$  by 
\begin{equation}\label{IFD}
A(s)=
\begin{cases}
-s^{2}\,\mbox{Log}(s^{2}), &\text{if $0\leq s\leq e^{-3}$;}\\
3s^{2}+4e^{-3}s^{}-e^{-6}, &\text{if $ s\geq e^{-3}$;}
\end{cases}
\,\,\,\,\,\,\,\,\,  B(s)=F(s)+A(s).
\end{equation}
Furthermore, let be functions $a$, $b$, defined by
\begin{equation}\label{abapex}
a(z)=\frac{z}{|z|^{2}}\,A(\left|z\right|)\,\, \text{ and  }\,\, b(z)=\frac{z}{|z|^{2}}\,B(\left|z\right|) \text{  for $z\in \mathbb{C}$, $z\neq 0$}.
\end{equation}
Notice that we have $b(z)-a(z)=z\mathrm{Log}\left|z\right|^{2}$. It follows that $A$ is a nonnegative  convex and increasing function, and $A\in C^{1}\left([0,+\infty)\right)\cap C^{2}\left((0,+\infty)\right)$. The Orlicz space $L^{A}(\mathbb{R})$ corresponding to $A$ is defined by
\begin{equation*}
L^{A}(\mathbb{R})=\left\{u\in L^{1}_{loc}(\mathbb{R}) : A(\left|u\right|)\in L^{1}_{}(\mathbb{R})\right\}, 
\end{equation*}
equipped with the Luxemburg norm 
\begin{equation*}
\left\|u\right\|_{L^{A}(\mathbb{R})}={\inf}\left\{k>0: \int_{\mathbb{R}^{}}A\left(k^{-1}{\left|u(x)\right|}\right)dx\leq 1 \right\}.
\end{equation*}
Here as usual $L^{1}_{loc}(\mathbb{R})$ is the space of all locally Lebesgue integrable functions. It is proved in \cite[Lemma 2.1]{CL} that $A$ is a Young-function which is $\Delta_{2}$-regular and $\left(L^{A}(\mathbb{R}),\|\cdot\|_{L^{A}} \right)$ is a separable reflexive  Banach space.\\

Next, we consider the reflexive Banach space $W(\mathbb{R})=H^{1}(\mathbb{R})\cap L^{A}(\mathbb{R})$ equipped with the usual norm $ \left\|u\right\|_{W(\mathbb{R}^{})}=\left\|u\right\|_{H^{1}(\mathbb{R}^{})}+\left\|u\right\|_{L^{A}(\mathbb{R}^{})}$. We can see that $W(\mathbb{R})=\bigl\{u\in H^{1}(\mathbb{R}):\left|u\right|^{2}\mathrm{Log}\left|u\right|^{2}\in L^{1}(\mathbb{R})\bigr\}$ (see (\ref{W})). This follows from the definition of the spaces $L^{A}(\mathbb{R})$ and $W(\mathbb{R})$ (see \cite[Proposition 2.2]{CL} for more details). Furthermore, one has the following chain of continuous embedding:
\begin{equation*}
W(\mathbb{R})\hookrightarrow L^{2}(\mathbb{R})\hookrightarrow W^{\prime}(\mathbb{R}),
\end{equation*}
where $W^{\prime}(\mathbb{R})=H^{ -1}(\mathbb R)+(L^A(\mathbb R))' $  is the dual space of $W(\mathbb{R})$ equipped with the usual norm.

Next, we list some properties of the Orlicz space $L^{A}(\mathbb{R})$ that we have used through our manuscript.  For a proof of such statements we refer to \cite[Lemma 2.1]{CL}.

\begin{proposition} \label{orlicz}
Let $\left\{u_{{m}}\right\}$ be a sequence in  $L^{A}(\mathbb{R})$, the following facts hold:\\
{\it i)} If  $u_{{m}}\rightarrow u$ in $L^{A}(\mathbb{R})$, then $A(\left|u_{{m}}\right|)\rightarrow A(\left|u\right|)$ in $L^{1}(\mathbb{R})$ as   $n\rightarrow \infty$.\\
{\it ii)} Let  $u\in L^{A}(\mathbb{R})$. If  $u_{m}(x)\rightarrow u(x)$ $a.e.$  $x\in\mathbb{R}$ and if 
\begin{equation*}
\lim_{n \to \infty}\int_{\mathbb{R}}A\left(\left|u_{m}(x)\right|\right)dx=\int_{\mathbb{R}}A\left(\left|u(x)\right|\right)dx,
\end{equation*}
then $u_{{m}}\rightarrow u$ in $L^{A}(\mathbb{R})$ as   $n\rightarrow \infty$.\\
{\it iii)} For any $u\in L^{A}(\mathbb{R})$, we have
\begin{equation}\label{DA1}
{\rm min} \left\{\left\|u\right\|_{L^{A}(\mathbb{R})},\left\|u\right\|^{2}_{L^{A}(\mathbb{R})}\right\}\leq  \int_{\mathbb{R}}A\left(\left|u(x)\right|\right)dx\leq {\rm max} \left\{\left\|u\right\|_{L^{A}(\mathbb{R})},\left\|u\right\|^{2}_{L^{A}(\mathbb{R})}\right\}.
\end{equation}
\end{proposition}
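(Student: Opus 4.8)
The plan is to deduce all three assertions from a short list of elementary properties of the Young function $A$ and the modular $\rho(u):=\int_{\mathbb R}A(|u|)\,dx$. First I would record, straight from the explicit formula \eqref{IFD} together with the convexity of $A$ and $A(0)=0$, the homogeneity estimates $A(\lambda s)\le\lambda A(s)$ for $0\le\lambda\le 1$ and $A(\lambda s)\ge\lambda A(s)$ for $\lambda\ge 1$, as well as the superquadratic bounds $A(\lambda s)\le\lambda^2 A(s)$ for $\lambda\ge 1$ and $A(\lambda s)\ge\lambda^2 A(s)$ for $0\le\lambda\le 1$. This last pair is the only non-generic input: since $A(s)=3s^2+4e^{-3}s-e^{-6}$ for $s\ge e^{-3}$ and $A(s)=-s^2\mathrm{Log}(s^2)$ for $0\le s\le e^{-3}$, a direct computation of $A(\lambda s)-\lambda^2 A(s)$ on each branch has a definite sign. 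I would also use the normalization $\rho(u/\|u\|_{L^A})=1$ for $u\neq 0$, valid because $A$ is continuous and $\Delta_2$-regular (the $\Delta_2$-regularity being precisely what forbids $\rho(u/\|u\|_{L^A})<1$).

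For iii) I would write $A(|u|)=A\big(k\cdot |u|/k\big)$ with $k:=\|u\|_{L^A}$ and feed in the estimates above. When $k\le 1$, convexity gives $A(|u|)\le k\,A(|u|/k)$ while the reverse superquadratic bound gives $A(|u|)\ge k^2 A(|u|/k)$; integrating and using $\rho(u/k)=1$ yields $k^2\le\rho(u)\le k$, that is $\min\{k,k^2\}\le\rho(u)\le\max\{k,k^2\}$. When $k\ge 1$ the two inequalities point the other way and give $k\le\rho(u)\le k^2$, which is again \eqref{DA1}. A consequence I would isolate for later use is the norm--modular equivalence for $\Delta_2$-regular $A$: $\|v_m\|_{L^A}\to 0$ if and only if $\rho(v_m)\to 0$.

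For i) and ii) the common engine is the Br\'ezis--Lieb lemma \cite{LBL} applied to the nonnegative convex function $A$ --- exactly the statement derived as \eqref{AC} in the proof of Lemma \ref{L4} ---, namely that $u_m\to u$ a.e.\ with $\sup_m\rho(u_m)<\infty$ forces $\int_{\mathbb R}\big|A(|u_m|)-A(|u_m-u|)-A(|u|)\big|\,dx\to 0$. For i), the equivalence from iii) turns $\|u_m-u\|_{L^A}\to 0$ into $\rho(u_m-u)\to 0$; Chebyshev's inequality then yields convergence in measure, so by the subsequence principle I may assume $u_m\to u$ a.e. Since $A(|u_m-u|)\to 0$ in $L^1$, the Br\'ezis--Lieb identity and the triangle inequality $\int|A(|u_m|)-A(|u|)|\le\int|A(|u_m|)-A(|u_m-u|)-A(|u|)|+\int A(|u_m-u|)$ give $A(|u_m|)\to A(|u|)$ in $L^1$ along the subsequence, hence for the whole sequence. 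For ii), the hypothesis $\rho(u_m)\to\rho(u)<\infty$ already gives $\sup_m\rho(u_m)<\infty$, so the same identity applies; combined with $\rho(u_m)\to\rho(u)$ it forces $\int A(|u_m-u|)\to 0$, i.e.\ $\rho(u_m-u)\to 0$, and the norm--modular equivalence upgrades this to $\|u_m-u\|_{L^A}\to 0$.

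I expect the genuine obstacle to be the converse direction ii): almost-everywhere convergence together with convergence of the modulars does not by itself control the Luxemburg norm, since mass could a priori escape to infinity or concentrate. The Br\'ezis--Lieb splitting is precisely what converts $\rho(u_m)\to\rho(u)$ into $\rho(u_m-u)\to 0$, and its applicability to $A$ rests on convexity, $A(0)=0$, and a $\Delta_2$-type growth bound of the form $A(s+t)\le(1+\varepsilon)A(s)+C_\varepsilon A(t)$ (the counterpart for $A$ of the estimate \eqref{DB} used for $B$); carrying out that verification, rather than the bookkeeping in iii), is where the real content lies.
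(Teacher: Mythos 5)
Your argument is correct, but note that the paper itself does not prove Proposition \ref{orlicz} at all: it simply defers to \cite[Lemma 2.1]{CL}, so what you have written is a self-contained reconstruction rather than a parallel of an argument in the text. Your route is the natural one, and it is consistent with the toolkit the authors themselves use later: the Br\'ezis--Lieb identity for $A$ that drives your proofs of i) and ii) is exactly the statement \eqref{AC} invoked in the proof of Lemma \ref{L4}, and the modular--norm comparison you derive in iii) is precisely how the paper passes between $\int A(|u_n|)\,dx$ and $\|u_n\|_{L^A}$ in Theorem \ref{ESSW} and Lemma \ref{CSM}. All the steps check out: the normalization $\int A(|u|/\|u\|_{L^A})\,dx=1$ does follow from $\Delta_2$-regularity, the Chebyshev-plus-subsequence reduction in i) is sound, and in ii) the hypothesis $\int A(|u_m|)\to\int A(|u|)<\infty$ does supply the uniform modular bound needed for Br\'ezis--Lieb. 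The one place I would tighten the write-up is the dilation estimate $A(\lambda s)\le\lambda^2A(s)$ for $\lambda\ge1$ (and its reverse for $\lambda\le1$): checking the sign of $A(\lambda s)-\lambda^2A(s)$ ``on each branch'' does not literally cover the case where $s\le e^{-3}<\lambda s$, i.e.\ where $s$ and $\lambda s$ fall on different branches of \eqref{IFD}. The clean fix is to verify the single global differential inequality $sA'(s)\le 2A(s)$ (which holds on both branches: it reduces to $2s^2\ge 0$ on the first and to $4e^{-3}s-2e^{-6}\ge 0$ on the second), whence $\lambda\mapsto A(\lambda s)/\lambda^{2}$ is nonincreasing and both two-sided bounds follow at once. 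With that adjustment the proof is complete and, in my view, preferable to a bare citation, since it makes explicit that only convexity, $A(0)=0$, $\Delta_2$-regularity and the at-most-quadratic dilation growth of $A$ are used.
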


The following Lemma is the base for showing the $C^1$-property of the energy functional  $E$ in (\ref{energy}) on $W(\mathbb R)$. 

\begin{lemma} \label{APEX23}
The operator $L: u\rightarrow \partial^{2}_{x}u+ \gamma\delta(x)u+u\,  \mathrm{Log}\left|u\right|^{2}$ is continuous from  $W(\mathbb{R})$  to $W^{\prime}(\mathbb{R})$. The image under $L$ of a bounded subset of $W(\mathbb{R})$ is a bounded subset of $W^{\prime}(\mathbb{R})$.
\end{lemma}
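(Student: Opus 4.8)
The plan is to decompose $L$ according to the structure of the equation and the identity $u\,\mathrm{Log}|u|^2 = b(u) - a(u)$ coming from (\ref{abapex}), writing
\[
Lu = \partial_x^2 u + \gamma\delta(x)u + b(u) - a(u),
\]
and to check that each summand is a continuous, bounded-set-preserving map into one of the two factors of $W'(\mathbb{R}) = H^{-1}(\mathbb{R}) + (L^A(\mathbb{R}))'$. The two linear terms are immediate. First, $\partial_x^2$ maps $H^1(\mathbb{R})$ continuously into $H^{-1}(\mathbb{R})$, since $\|\partial_x^2 u\|_{H^{-1}} \le \|\partial_x u\|_{L^2} \le \|u\|_{W(\mathbb{R})}$. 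Second, the delta term acts by $\langle \gamma\delta(x)u, v\rangle = \gamma\,\Re(u(0)\overline{v(0)})$, and the embedding $H^1(\mathbb{R}) \hookrightarrow L^\infty(\mathbb{R})$ gives $|u(0)| \le C\|u\|_{H^1}$; hence $u \mapsto \gamma\delta(x)u$ is a bounded linear map into $H^{-1}(\mathbb{R}) \subset W'(\mathbb{R})$. Being linear and bounded, both are continuous and send bounded sets to bounded sets.

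For the term $b(u)$ I would exploit the controlled growth of $b$ together with the chain $W(\mathbb{R}) \hookrightarrow H^1(\mathbb{R}) \hookrightarrow L^\infty(\mathbb{R})$. Since $b(z) = 0$ for $|z| \le e^{-3}$ and $|b(z)| \le C\,|z|\,(1 + \bigl|\mathrm{Log}\,|z|^2\bigr|)$ for all $z$, a bound $\|u\|_{H^1} \le M$ yields a uniform $L^\infty$ bound $\|u\|_{L^\infty} \le R = R(M)$, on the ball of which $b$ is Lipschitz. This gives $\|b(u)\|_{L^2} \le C(R)\|u\|_{L^2}$ and, for $\|u\|_{H^1}, \|v\|_{H^1} \le M$, the $L^2$ estimate $\|b(u) - b(v)\|_{L^2} \le C(R)\|u - v\|_{L^2}$, which is the pointwise counterpart of the key inequality (\ref{DB}). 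Thus $b$ maps $W(\mathbb{R})$ into $L^2(\mathbb{R}) \subset H^{-1}(\mathbb{R})$, is bounded on bounded sets, and is continuous, using that $H^1$-convergence implies $L^\infty$-convergence.

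The term $a(u)$ is placed in the Orlicz factor $(L^A(\mathbb{R}))' = L^{A^*}(\mathbb{R})$, where $A^*$ is the Young function complementary to $A$. By convexity of $A$ one has $|a(z)| = A(|z|)/|z| \le A'(|z|)$, and Young's equality $sA'(s) = A(s) + A^*(A'(s))$ combined with the $\Delta_2$-regularity of $A$ (recorded after (\ref{IFD})) gives $A^*(A'(s)) \le C\,A(s)$; hence $A^*(|a(u)|) \le C\,A(|u|)$ pointwise. Integrating and using (\ref{DA1}) together with the corresponding norm–modular relation in $L^{A^*}(\mathbb{R})$ shows that $a$ maps $L^A(\mathbb{R})$ boundedly into $L^{A^*}(\mathbb{R})$, with $\|a(u)\|_{L^{A^*}}$ controlled by $\|u\|_{L^A}$; this yields boundedness on bounded subsets of $W(\mathbb{R})$. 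For continuity I would take $u_n \to u$ in $W(\mathbb{R})$, hence in $L^A(\mathbb{R})$, pass to an a.e.-convergent subsequence dominated in $L^A(\mathbb{R})$, and use the $\Delta_2$ condition together with a Vitali/dominated-convergence argument (in the spirit of \cite[Lemma 2.1]{CL}) to obtain $a(u_n) \to a(u)$ in $L^{A^*}(\mathbb{R})$; the usual subsequence principle then upgrades this to convergence of the full sequence. Assembling the four pieces yields that $L$ is continuous from $W(\mathbb{R})$ to $W'(\mathbb{R})$ and maps bounded sets to bounded sets.

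I expect the main obstacle to be the continuity of $a$ in the Orlicz dual $L^{A^*}(\mathbb{R})$: its boundedness follows directly from the pointwise bound $A^*(|a(u)|) \le C\,A(|u|)$ and (\ref{DA1}), but continuity requires controlling $a(u)$ near the zero set of $u$, where the logarithmic factor is singular. This is exactly the reason the Orlicz space $L^A(\mathbb{R})$ rather than $H^1(\mathbb{R})$ is needed, and it is where the $\Delta_2$-regularity supplied by \cite[Lemma 2.1]{CL} becomes decisive.
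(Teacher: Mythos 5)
Your proposal is correct and takes essentially the same approach as the paper: the linear part $-\partial^{2}_{x}-\gamma\delta$ is treated exactly as in the paper's proof, via the form $\mathfrak{t}_{\gamma}$ and the embedding $W(\mathbb{R})\hookrightarrow H^{1}(\mathbb{R})$, while for the nonlinearity the paper simply cites \cite[Lemma 2.6]{CL}, whose proof is precisely the decomposition $u\,\mathrm{Log}|u|^{2}=b(u)-a(u)$ that you carry out (with $b(u)$ landing in $L^{2}\subset H^{-1}$ by the local Lipschitz bound behind \eqref{DB}, and $a(u)$ in $(L^{A})'\cong L^{A^{*}}$ via the Young-conjugate and $\Delta_{2}$ estimates). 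So you have in effect reproved the cited lemma rather than deviated from the paper's argument; there are no gaps.
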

\begin{proof}
As usual, the operator $\left(-\partial^{2}_{x}-\gamma\delta(x)\right)u$ is naturally extended to $H^{1}(\mathbb{R})\rightarrow H^{-1}(\mathbb{R})$ via the relation (see \eqref{UKI})
\begin{equation*}
\left\langle (-\partial^{2}_{x}-\gamma\delta(x))u,v\right\rangle=\mathfrak{t_{\gamma}}(u_{},v),  \quad  \textrm{for} \quad u,v\in H^{1}(\mathbb{R}).
\end{equation*}
Now, using ${W}({\mathbb{R}})\hookrightarrow H^{1}(\mathbb{R})$, we obtain that the linear operator $u\rightarrow -\partial^{2}_{x}u-\gamma\delta(x)u$ is continuous from ${{W}}({\mathbb{R}})$ to ${W}^{\prime}({\mathbb{R}})$. Thus, since $u\rightarrow u\mathrm{Log}\left|u\right|^{2}$ is continuous and bounded  from ${{W}}({\mathbb{R}})$ to ${W}^{\prime}({\mathbb{R}})$ (see  \cite[Lemma 2.6]{CL}), it follows that the operator $L: {{W}}({\mathbb{R}})\rightarrow {W}^{\prime}({\mathbb{R}})$ is continuous and bounded. Lemma \ref{APEX23} is thus proved.
\end{proof}

From Lemma \ref{APEX23},  we have the following consequence:

\begin{proposition} \label{DFFE}
The operator $E: W(\mathbb{R})\rightarrow \mathbb R$  is of class $C^{1}$ and  for $u\in W(\mathbb{R})$ the  Fr\'echet derivative of $E$ in $u$ exists and it is given by  
\begin{equation*}
E^{\prime}(u)=-\partial^{2}_{x} u- \gamma\delta(x)u-u\, \mathrm{Log}\left|u\right|^{2}-u \in W'(\mathbb R)
\end{equation*}
\end{proposition}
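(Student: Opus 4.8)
The plan is to split $E=E_1+E_2+E_3$, where $E_1(u)=\frac12\|\partial_x u\|_{L^2}^2$, $E_2(u)=-\frac{\gamma}{2}|u(0)|^2$ and $E_3(u)=-\frac12\int_{\mathbb R}|u|^2\,\mathrm{Log}|u|^2\,dx$, treating the two quadratic pieces directly and reducing the nonlinear piece to Lemma \ref{APEX23}. Both $E_1$ and $E_2$ are bounded Hermitian quadratic forms on $W(\mathbb R)$: $E_1$ because $\partial_x\colon W(\mathbb R)\to L^2(\mathbb R)$ is bounded, and $E_2$ because the trace $u\mapsto u(0)$ is bounded on $H^1(\mathbb R)\supseteq W(\mathbb R)$. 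Hence each is $C^\infty$, and reading off the bilinear form $\mathfrak t_\gamma$ in \eqref{UKI} gives $E_1'(u)=-\partial_x^2 u$ and $E_2'(u)=-\gamma\delta(x)u$ as elements of $W'(\mathbb R)$.

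For the logarithmic term I would first exhibit the Gateaux derivative. Fixing $u,h\in W(\mathbb R)$ and differentiating the scalar integrand gives, for $t\neq 0$ and a.e.\ $x$, $\frac{d}{dt}\big(|u+th|^2\,\mathrm{Log}|u+th|^2\big)=2\,\Re\big((u+th)\overline h\big)\big(\mathrm{Log}|u+th|^2+1\big)$, so the candidate is $\langle E_3'(u),h\rangle=-\Re\int_{\mathbb R}\big(u\,\mathrm{Log}|u|^2+u\big)\overline h\,dx$. To pass to the limit in the difference quotient I would apply the mean value theorem to the real function $s\mapsto s^2\,\mathrm{Log}\,s^2$ along the segment joining $|u|$ and $|u+th|$ and dominate using the decomposition $F=B-A$ of \eqref{IFD}: the (integrable, singular near $0$) $A$-part is controlled in the Orlicz norm, while the $B$-part is controlled through a Sobolev estimate of the type \eqref{DB}. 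Dominated convergence then yields the claimed Gateaux derivative.

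The decisive observation is that this candidate is exactly $E'(u)=-L(u)-u$, with $L$ the operator of Lemma \ref{APEX23}. Continuity of $u\mapsto E'(u)$ from $W(\mathbb R)$ to $W'(\mathbb R)$ is then immediate: $L$ is continuous (and bounded on bounded sets) by Lemma \ref{APEX23}, the map $u\mapsto-\partial_x^2u-\gamma\delta(x)u$ is continuous linear, and $u\mapsto u$ is continuous through the chain $W(\mathbb R)\hookrightarrow L^2(\mathbb R)\hookrightarrow W'(\mathbb R)$. Since the Gateaux derivative exists at every point and is continuous as a map into the dual, the standard criterion (a continuous Gateaux derivative is Fr\'echet; see, e.g., \cite{CB}) upgrades this to $E\in C^1(W(\mathbb R),\mathbb R)$ with the stated $E'(u)$.

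The main obstacle is the Gateaux step for $E_3$: justifying the interchange of limit and integral near the zeros of $u$, where $\mathrm{Log}|u|^2\to-\infty$, and for large $|u|$. The near-zero singularity is harmless because $s\,\mathrm{Log}\,s^2\to 0$ as $s\to 0^+$, so the $A$-part stays integrable, while the growth at infinity is exactly what the Orlicz space $W(\mathbb R)$ (rather than $H^1(\mathbb R)$) is designed to absorb through \eqref{DA1} and \eqref{DB}. This is precisely why $E$ is $C^1$ on $W(\mathbb R)$ but fails to be so on all of $H^1(\mathbb R)$.
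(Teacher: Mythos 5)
Your proposal is correct and follows essentially the same route as the paper: both compute the G\^ateaux derivative of the quadratic part via the form $\mathfrak t_\gamma$ and of the logarithmic part via the $A$/$B$ decomposition of \eqref{IFD} together with the estimate \eqref{DB} (the paper delegates this step to \cite[Proposition 2.7]{CL}), and both then invoke the continuity of $u\mapsto -\partial_x^2u-\gamma\delta(x)u-u\,\mathrm{Log}|u|^2$ from Lemma \ref{APEX23} to upgrade the continuous G\^ateaux derivative to Fr\'echet differentiability and the $C^1$ property.
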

\begin{proof}
We first show that $E$  is continuous. Notice that 
\begin{equation}\label{CCC}
E(u)=\frac{1}{2}\mathfrak{t}_{\gamma}(u)+\frac{1}{2}\int_{\mathbb{R}}A(\left|u_{}\right|)dx-\frac{1}{2}\int_{\mathbb{R}}B(\left|u_{}\right|)dx.
\end{equation}
The first term in the right-hand side of \eqref{CCC} is continuous of $H^{1}(\mathbb{R})\rightarrow \mathbb R$, and it follows from 
Proposition \ref{orlicz}(i) that the second term is continuous of $L^{A}(\mathbb{R})\rightarrow \mathbb{R}$. Moreover, by \eqref{DB} we get that the third term  in the right-hand side of \eqref{CCC} is continuous of $H^{1}(\mathbb{R})\rightarrow \mathbb R$. Therefore, $E\in C(W(\mathbb{R}),\mathbb{R})$. Now, direct calculations show that, for $u$, $v\in W(\mathbb{R})$, $t\in (-1,1)$ (see \cite[Proposition 2.7]{CL}),
\begin{equation*}
\lim_{t\rightarrow 0} \frac{E(u+tv)-E(u)}{t}=\left\langle -\partial^{2}_{x} u- \gamma\delta(x)u-u\, \mbox{Log}\left|u\right|^{2}-u,v\right\rangle_{W(\mathbb{R})^{}-W^{\prime}(\mathbb{R})}.
\end{equation*}
Thus, $E$ is G\^ateaux differentiable. Then, by Lemma \ref{APEX23} we see that $E$ is  Fr\'echet differentiable  and $E^{\prime}(u)=-\partial^{2}_{x} u- \gamma\delta(x)u-u\, \mathrm{Log}\left|u\right|^{2}-u$.
\end{proof}

%in the sense that for $h\in W(\mathbb{R})$,
%$$
%E'(u)(h)= \langle E'(u), h\rangle= \Re\Big [ \int_{\mathbb R} u'\overline{h'}\,dx - \int_{\mathbb R} u\overline{h} \mathrm{Log} |u|^2\,dx -  %\int_{\mathbb R} u\overline{h}\,dx\Big ]-\gamma \Re (u(0)\overline{h(0)}).
%$$

\end{document}